\newcommand{\unit}[1]{\ensuremath{\, \mathrm{#1}}}
\newtheorem{theorem}{Theorem}
\newtheorem{lemma}{Lemma}
\DeclareMathOperator{\diag}{diag}
\pgfplotsset{compat=newest}
\pgfplotsset{plot coordinates/math parser=false}
\pgfplotsset{major grid style={dashed,line width=0.5pt}}
\newlength\figureheightB
\newlength\figurewidthB
\newlength\figureheightC
\newlength\figurewidthC
\newlength\figureheightD    
\newlength\figurewidthD
\newlength\figureheightE    
\newlength\figurewidthE
\def\reffig{Figure}
\def\x{{\mathbf x}}
\def\a{{\mathbf a}}
\def\betamm{\beta_{m}}
\newcommand{\gameA}{\mathcal{G}_{\bm{\delta}}}
\newcommand{\gameB}{\mathcal{G}_{m}}
\newcommand{\gameBt}{\tilde{\mathcal{G}}_{m}(\bm{\lambda})}
\title{Robust Worst-Case Analysis of Demand-Side Management in Smart Grids}
\author{Javier~Zazo,~\IEEEmembership{Member,~IEEE,}
        Santiago~Zazo,~\IEEEmembership{Member,~IEEE,}
        and~Sergio~Valcarcel Macua,~\IEEEmembership{Student Member,~IEEE}
\thanks{All authors are affiliated with the Universidad Polit\'{e}cnica de Madrid, in Av. Complutense 30, Madrid, 28040, Spain.}
\thanks{Emails: javier.zazo.ruiz@upm.es,\{santiago,sergio\}@gaps.ssr.upm.es}
\thanks{This work was supported in part by the Spanish Ministry of Science and
Innovation under the grant TEC2013-46011-C3-1-R (UnderWorld), the
COMONSENS Network of Excellence TEC2015-69648-REDC and by an FPU
doctoral grant to the first author.
Paper no. TSG-00815-2015.}%
\thanks{Digital Object Identifier \href{http://ieeexplore.ieee.org/xpl/articleDetails.jsp?arnumber=7460956}{10.1109/TSG.2016.2559583}}
}
\begin{document}
\normalsize

\maketitle

\begin{abstract}
Demand-side management presents significant benefits in reducing the energy load in smart grids by balancing consumption demands or including energy generation and/or storage devices in the user's side. 
These techniques coordinate the energy load so that users minimize their monetary expenditure.
However, these methods require accurate predictions in the energy consumption profiles, which make them inflexible to real demand variations. 
In this paper we propose a realistic model that accounts for uncertainty in these variations and calculates a robust price for all users in the smart grid.
We analyze the existence of solutions for this novel scenario, propose convergent distributed algorithms to find them, and perform simulations considering energy expenditure.
We show that this model can effectively reduce the monetary expenses for all users in a real-time market, while at the same time it provides a reliable production cost estimate to the energy supplier.
\end{abstract}

\begin{IEEEkeywords}
Load management, robust analysis, non-convex optimization, distributed algorithms, game theory
\end{IEEEkeywords}

\section{Introduction}
\label{sec:intro}
\IEEEPARstart{S}{mart} grids represent the concept behind the intended evolution of the electric grid, which through information acquisition from the end users will allow for more efficient energy distribution, flexibility in network topology, adaptive load management and better integration of renewable energy plants with the consumption requirements of the users.

One important concept to achieve the smart grid's objectives is the demand-side management, which includes the techniques for better energy efficiency programs, distributed energy generation, energy storage and load management. 
The model presented in this work establishes a robust energy price one day ahead, which takes into account the production costs of energy as well as uncertainties in the expected energy loads.
We also consider a real-time market, where users are charged their specific demanded energy at the robust price, permitting some deviation without any extra penalty.
This model provides some flexibility to every user in their real-time consumption energy loads and calculates the energy prices in a conservative manner, taking into account production costs and possible errors.
In particular, we focus on a robust analysis of the demand-side management algorithms, where the energy strategies account for worst-case error deviations in the predicted loads.
In our formulation, these error terms vary from user to user and depend on the energy load profiles.

We show that this model can effectively reduce the monetary expenses of all users in the real-time market, while at the same time it can provide a confident production cost estimate to the energy supplier.
Our analysis also shows that the added computational cost of these error calculations is minimal and the monetary gain is significant.
Moreover, the calculations are amenable to a distributed implementation within the communications network of the smart grid.
This is important because distributed algorithms are beneficial in terms of communication efficiency and computation scalability~\cite{Bertsekas1997}.

To further support our results we perform simulations in a real-time modeled scenario.
We also compare our robust proposal with a naive version where users do not take into account price deviations, but where the energy supplier still establishes a robust price.
Finally, we also present results illustrating the convergence behavior of our algorithm.

This study can be of special interest to energy supply authorities who can analyze how much energy they should account for if they plan for grid load estimation errors and peak demand costs. 
Even though users may be only interested in their monetary expenditure as energy consumers, supply authorities have to account for local deviations in the distribution network and coordinate with all agents to provide unobstructed service by controlling regular and peak supply stations. 
The fact that an agent in the network can alter the expected energy load by injecting or subtracting energy unexpectedly is a source of extra costs that have to be taken into account by suppliers that monitor the grid's energy demand. 
Therefore, these energy provision costs justify a modified pricing model and a worst-case analysis of the demand-side network.

\IEEEpubidadjcol

\vspace{-1em}
\subsection{Related Work}
The work presented here is focused on the study of the demand-side management framework through noncooperative game theory. 
We build our results from \cite{Atzeni2012}, where the authors consider a day-ahead scheduling algorithm and the users optimize energy generation and storage profiles in a distributed manner.
Our model extends~\cite{Atzeni2012} by adding error terms to the expected consumption profiles (energy loads in the smart grid) and admitting variations in the estimated values with a bounded error. 
In addition, we analyze the effect that these error terms produce in the expected energy demand.
In order to do so, we extrapolate some techniques from reference~\cite{Cumanan2008}, where a robust algorithm for communications is developed.

Regarding a real-time energy market, several models have been proposed in the literature. 
For instance, the authors from reference \cite{Atzeni2012a} propose a real-time cost function with penalization terms in which the users minimize the expected monetary expenses. 
The energy production model assumes that the cost of energy is given by a quadratic function of the total demand, and therefore, the whole energy price of the smart grid is affected by the real-time variations.
In such setting, the energy price is predetermined one day ahead, but it does not account for the unpredictable production costs of real-time demand. 
The penalty functions should account for these extra expenses, but they are also predetermined in advance. 
If they are set too costly, they would affect greatly the monetary expenses of users in the real-time market. 
On the other hand, if the penalties are set too low, they would cause an economic loss to the energy supplier.
Our robust model, on the contrary, sets up an energy price that will allow the energy supplier to produce energy without incurring unplanned extra costs, and will allow users flexibility in their energy demand.

Another real-time model is introduced by reference \cite{Samadi2010}, which proposes an optimization problem in which users demand energy according to a personal utility function.
In this scenario prices are updated according to production costs, but demand is fully elastic, within some predetermined bounds, and the problem optimizes a social equilibrium point, rather than accounting for individual energy requirements.
Furthermore, the energy supplier may experience monetary losses since the pricing mechanism does not cover production costs. 
In contrast, our algorithm accommodates individual energy requirements and ensures that production costs are covered by the pricing mechanism.

The authors of reference \cite{Conejo2010} propose a robust scenario for a real-time market based on linear programming, which allocates energy load profiles within a time frame. 
Because of the uncertainty of prices in real-time, users decide when to consume energy according to some modeling parameters, price range estimates and energy requirements.
Although the algorithm is robust in the sense that it assumes a range of real-time energy prices, the model considers that prices are given by the energy supplier without taking into account production costs. 
Our robust model, on the other hand, establishes the prices one day ahead considering possible worst-case real-time variations, and takes into account production costs.

A relevant survey describing some of the last results and problems of the smart grid infrastructure can be found in~\cite{Saad2012}, which analyzes different contributions and open problems in the topic, where demand-side-management is one example of these. 
Another publication that models the interaction among resident users and distributed energy resources is the one introduced by~\cite{Tushar2014}, but it considers prefixed energy generation and storage load profiles, as opposed to our framework, which can adapt the profiles depending on the energy prices. 
Regarding user behavior, the authors of reference~\cite{Wang2014} propose a model that takes into account the subjective behavior of users, but still does not consider imprecise load predictions. 

Therefore, our contribution lies in considering a day-ahead energy market and minimizing the energy costs of users in a robust situation where energy demand errors are taken into account.
In Section~\ref{sec:model} we describe the demand-side model, the day-ahead cost function, the real-time market and briefly describe the day-ahead optimization process.
In Section~\ref{sec:worstcase} we introduce the robust game and analyze it's solution through an equivalent formulation.
We also present a distributed algorithm that solves the game and analyze it's convergence properties.
Finally, in Section~\ref{sec:simulations} we present simulations to illustrate the validity of our results.

\vspace{-0.5em}
\section{Smart Grid Model}
\label{sec:model}
In this section we introduce the smart grid model, the types of users in the demand-side of the network, the energy model, the pricing plan and the error terms. We consider a general framework with arbitrary convex regions and convex objectives.

\vspace{-1em}

\subsection{Demand-Side Model}
The smart grid topology can be divided in three parts, namely the supply-side, a central unit or regulation authority, and the demand-side. 
The supply-side involves the energy producers, as well as the  distribution network. 
The central unit coordinates the optimization process.
The demand-side entails the individual end-users as consumers, possibly with means to generate and/or store energy and/or vary their consumption profiles.
The demand-side may also include users with great impact on the smart grid, such as industries with high energy demands or opportunistic agents who participate as local energy suppliers in the network.
 
In this paper, the demand-side management focuses on calculating optimal policies for monetary savings on the consumers' side, while accounting for worst-case deviations from the consumption profiles due to unpredictable considerations in the demand-side. 
The optimization process involves all users together with the central unit, who exchange information for a more efficient use and distribution of energy. 
This communication is accomplished with smart meters, which are the devices in charge of the bidirectional transmission of data with the central unit and the optimization process. 

We assume that all end users indicate their intended amount of energy consumption one day ahead, so that energy prices can be calculated based on demand levels during the optimization process.
Then, the users can readjust their generation, storing and consumption profiles according to the variability of prices. 
This process is repeated iteratively until convergence. Algorithms that analyze this process have been studied in \cite{Atzeni2012} and \cite{Mohsenian-Rad2010}. 
However, due to errors in their estimations, the real consumption may vary from the announced values. 
If these variations are uncorrelated among users, sometimes the energy left unused by some consumers will be spent by others inducing that, on average, the supply-side will not require to adjust their generation rates. 
However, this expected result may not always be unconditionally true.
In addition, there is interest from the supply-side to provide energy for unexpected events that may require higher demands. 
Therefore, we present analytical results on identifying worst-case energy errors, for a given confidence interval.

We represent the set of users who participate in this process with $\mathcal{D}$, and additionally subdivide them into the set of active users $\mathcal{N}$ and the set of passive users $\mathcal{P}$. 
Active users have some means of generating energy on demand (from dispatchable sources), possess devices to store energy for future use or adapt their consumption profiles. 
On the contrary, passive users do not adjust their energy consumption profiles and their consumption is estimated from historical data.

The consumption values are determined one day ahead among all users, and are subdivided in time-slots $h\in H=\{1,\ldots,|H|\}$.
We define the \textit{estimated per-slot energy load profile} $l_n(h)$ that indicates the estimated energy usage of user $n\in \mathcal{D}$ at time-slot $h$. 
We also represent these variables as row vectors $\bm{l}_n=[l_n(1),\ldots,l_n(|H|)]\in\mathbb{R}^{|H|}$, $\forall n\in \mathcal{D}$.
Additionally, let $\bm{l}=[\bm{l}_1,\ldots,\bm{l}_{|\mathcal{D}|}]\in\mathbb{R}^{|H||D|}$ denote the global vector with the energy loads of all users.
Each individual term $\bm{l}_n$ includes the energy consumption of user $n$ and the energy contributions or added expenditures that may decrease or increase the energy load due to the user owning some device to generate or store energy, respectively.
Naturally, these profiles are variables to the optimization process and will vary according to the different strategies the user can select. 
It is satisfied that $l_n(h)>0$ if the energy flows from the grid to the user, and $l_n(h)<0$ otherwise, when the user is selling energy to the grid. 

We assume that the total load in the network--\textit{aggregate per-slot energy load}--at time slot $h$ denoted by $L(h)$ is positive:
\begin{IEEEeqnarray}{c}
	0<L(h) \triangleq \sum_{n\in \mathcal{D}}l_n(h)
	\vspace{-0.3em}
	\label{eq:total-load}
\end{IEEEeqnarray}
and it is calculated by the central unit by aggregating everybody's estimated load, including passive and active user's load.

Finally, we introduce the \textit{per-slot energy load error profile} $\delta_n(h)$ as the difference from the estimated \textit{per-slot energy load profile} $l_n(h)$ and the real value. 
This difference comes from the assumption that demand-side users do not know their consumption requirements with precision and they may deviate from their estimates. 
Additionally, active users may also deviate from their calculated generation and storing policies for unpredictable reasons such as malfunctioning, disconnections or lack of collaboration. 
For these reasons, introducing an error term makes the model more realistic. 
We represent these error terms both as row vectors $\bm{\delta}_n=[\delta_n(1),...,\delta_n(H)]$, $\forall n\in \mathcal{D}$ as the error profiles of user $n$ and as column vectors $\bm{\delta}(h)=[\delta_1(h),...,\delta_{|\mathcal{D}|}(h)]^T$, $\forall h\in H$.   
Note that the index $n$ or $h$ allows to easily distinguish which one we are referring to.
Additionally, let $\bm{\delta}=[\bm{\delta}_1,\ldots,\bm{\delta}_{|\mathcal{D}|}]\in\mathbb{R}^{|H||\mathcal{D}|}$ denote the global vector with the error terms of all users.


\subsection{Day-ahead Cost Model}
The purpose of our day-ahead model is to establish an estimate of the energy price that is robust against real-time energy demand variations. 
From a practical perspective, it determines a worst-case energy production cost taking into account expected user energy loads plus uncertainties, so that the energy supplier does not suffer monetary losses due to unplanned real-time demand.

The energy price in the network is determined by the \textit{grid cost function} $C_h$, which is fixed by the supply-authority and depends on the aggregate per-slot energy load $L(h)$ at each time-slot. 
We use a quadratic cost function, which is widely used in the literature (see e.g.~\cite{Atzeni2012,Mohsenian-Rad2010}):
\vspace{-0.5em}
\begin{IEEEeqnarray}{c}
	C_h(L(h),\bm{\delta}(h)) \triangleq K_h\Big(L(h)+\sum_{n\in \mathcal{D}}\delta_n(h)\Big)^2
\label{eq:grid_cost}
\vspace{-0.5em}
\end{IEEEeqnarray}
where the coefficients $\{K_h\}_{h=1}^{|H|}>0$ represent the energy cost at each time slot determined by the supply-side. 
In addition, the individual cost for user $n\in\mathcal{D}$ at time slot $h$ is its proportional part of the total cost, i.e., $C_h(L(h),\bm{\delta}(h))\cdot \Big((l_n(h)+\delta_n(h))/(L(h)+\sum_{n\in \mathcal{D}}\delta_n(h))\Big)$, so that its cumulative extension for all time-slots is given by:
\begin{equation}
	f_n(\bm{l},\bm{\delta})\triangleq\sum_{h\in H}K_h\Big( L(h)+\bm{1}^T\bm{\delta}(h)  \Big) (l_n(h)+\delta_n(h))+M(\bm{\delta}_n)
\label{eq:indiv_cost}
\vspace{-0.3em}
\end{equation}
where $\bm{l}_n$ is a convex function; $\bm{l}_n\in \Omega_{\bm{l}_n}$; all regions $\Omega_{\bm{l}_n}$ are convex, compact and independent among users; and $\bm{1}$ is a column vector of appropriate size.
Additionally, we have included a penalization term $M(\bm{\delta}_n)$ that includes management or distribution costs, which accounts for the extra expenses that the supply authority has to account for during the pricing process.
We separate these penalization terms from the energy prices due to the inherent unpredictable nature of the energy demand, which is an additional consideration with regards to the models in references~\cite{Atzeni2012,Mohsenian-Rad2010}.

Since we consider an arbitrary form of $\bm{l}_n$ and $\Omega_{\bm{l}_n}$, our robust proposal can be adapted to different demand-side models.
For instance, in the model from~\cite{Atzeni2012}, $\bm{l}_n$ has a linear expression which is a function of the generation and storing capabilities of its users. 
Likewise, the model from~\cite{Mohsenian-Rad2010} optimizes an energy user demand scheduling algorithm where $\bm{l}_n$ is linear and the feasible region is convex. 
We can, therefore, adapt a robust energy price that accounts for real-time uncertainties. 
And apart from the models \cite{Atzeni2012} and \cite{Mohsenian-Rad2010}, we can trivially assume a third one: one in which all users are passive, have no elastic demand at all, and simply do not optimize their energy load profiles.
In such case, our robust model can still be applied to determine worst-case estimation errors.

Finally, the error terms we have accounted for in~\eqref{eq:grid_cost} contribute to the user's monetary expense $f_n(\bm{l},\bm{\delta})$, since they affect the real-time energy price. 
In order to deal with these unknown demand terms, in Section \ref{sec:worstcase} we propose a worst-case analysis by solving a coupled min-max game.
This analysis will allow the supply authority to provide for excess energy demands and establish a robust energy price that accounts for unplanned demand.
In the next subsection we present how to integrate these worst-case error terms in a real-time scenario to obtain a robust model.
\vspace{-1em}

\subsection{Real-time Pricing Model}
\label{sub:real-time-pricing-model}
Once the robust price has been established one day ahead, if their demand remains within the worst-case estimation error limits, users are charged the amount of energy they demand in real time at the robust price. 
Outside of these limits, a penalty function is introduced to cover any extra expenses.
Specifically, our proposed real-time cost model is
\begin{equation}
	f_{n}^{\text{rt}}\left(\bm{l},\bm{\delta}^{\ast},\bm{l}^{\text{rt}}\right)=\sum_{h\in H}K_{h}\hat{L}_{r}(h)\big( l^{\text{rt}}_{n}(h)
	+ \Psi_{nh}(\bm{l},\bm{\delta}^{\ast},\bm{l}^{\text{rt}}) \big)+ M(\bm{\delta}_n) 
\label{eq:robust_real_time_cost}
\end{equation}
where $\bm{l}$ is the energy load profile obtained from the day-ahead optimization process, $\bm{\delta}^{\ast}$ is the worst-case estimation error (which is analyzed in Section~\ref{sec:worstcase}), and
\begin{equation}
	\hat{L}_{r}(h) = \sum_{m\in\mathcal{D}} l_m(h) + \delta^{\ast}_m(h)
	\label{eq:robust-price}
\end{equation}
is the robust total demanded energy plus worst-case estimation errors, calculated in the day-ahead optimization process. Variable $\bm{l}^\text{rt}$ refers to the actual energy load that users demand in real-time, and $\Psi_h$ is a penalty function that increases the cost whenever a user deviates from its specified load profile range; for example the following function is a real-time cost model inspired by~\cite{Atzeni2012a}:
\begin{IEEEeqnarray}{c}
\begin{IEEEeqnarraybox}[][c]{rCl}
	\Psi_{nh}(\bm{l},\bm{\delta}^{\ast},\bm{l}^{\text{rt}}) &=& \nu_h \big(l_n(h)-\delta^{\ast}_n(h)-l_n^{\text{rt}}(h)\big)^+  \\
	&&+ \upsilon_h \big(l_n^{\text{rt}}(h)-l_n(h)-\delta_n^{\ast}(h)\big)^+
	\label{eq:penalization-function3}
\end{IEEEeqnarraybox}
\end{IEEEeqnarray}
where $\nu_h$ and $\upsilon_h$ are scalar terms and $(\cdot)^+=\max(\cdot,0)$.
The penalty values discourage the return of unused energy during night or low demand times, and higher use of energy during day or high demand times.

Note that the penalty function proposed in~\eqref{eq:penalization-function3} differs from~\cite{Atzeni2012a} in that it does not charge any extra cost within the uncertainty margins. 
The reason is that our robust algorithm already takes into account the extra costs of unplanned demand from the production model. 
Note also that a user can still deviate further than the worst-case error estimates and, in such case, some penalty function should dissuade the user from these extremes; nevertheless, these deviations are limited with some probabilistic confidence.

Thus, our robust model acknowledges the cost of producing a certain amount of energy plus uncertainties, whereas current real-time models (like~\cite{Atzeni2012a}) only establish a surcharge for extra demand. Robust pricing models are of interest for energy suppliers as they take into account production costs under real-time price fluctuations. This is an important remark, because the real-time demand affects the whole energy price, and not only the extra demanded energy.

\vspace{-1em}
\subsection{Day-ahead Optimization Process}
We can now introduce the game $\gameA=\left\langle \Omega_{\bm{l}},\bm{f} \right\rangle$ that minimizes the user's monetary expense with fixed $\bm{\delta}$:
\begin{IEEEeqnarray*}{c'l'l}
	\gameA: & \left\{  \,
\begin{IEEEeqnarraybox}[][c]{r'l}
	 \min_{\bm{l}_n} & f_n(\bm{l}_n,\bm{l}_{-n},\bm{\delta})   \\
	 \text{s.t.} & \bm{l}_n\in \Omega_{\bm{l}_n},
\end{IEEEeqnarraybox} \right. & \forall n\in \mathcal{D}
\label{eq:game1}
\end{IEEEeqnarray*}
where $\bm{l}_{-n}=[\bm{l}_1,\ldots,\bm{l}_{l-1},\bm{l}_{l+1},\ldots,\bm{l}_{|\mathcal{D}|}]$ represents the rest of the other user's policies, which influence the user's decision. 
We consider the Nash Equilibrium (NE) as the solution concept of interest for this game, defined as a feasible point $\bm{l}^\ast=[\bm{l}_1^\ast,\cdots,\bm{l}_{|\mathcal{D}|}^\ast]$, such that it satisfies
\begin{IEEEeqnarray}{r'l}
	f_n(\bm{l}_n^\ast,\bm{l}_{-n}^\ast,\bm{\delta}) \leq f_n(\bm{l}_n,\bm{l}^\ast_{-n},\bm{\delta}) & \forall \bm{l}_n\in\Omega_{\bm{l}_n}
\end{IEEEeqnarray}
for every player $n\in\mathcal{D}$. 
In our framework, $\gameA$ has at least an equilibrium point, which can be reached through a convergent algorithm as established by the following theorem:

\begin{theorem}
\label{th:theorem-gameA}
The following holds for the game $\gameA$ with given and fixed error values $\bm{\delta}$:
\begin{enumerate}[label=\alph*)]
\item It has a nonempty, compact and convex solution set.
\item The individual cost values for the payoff functions of each player are equal for any NE solution of the game.
\item The game is monotone and NE solutions can be reached through algorithms of the kind proposed in \cite{Scutari2012}.
\end{enumerate}
\end{theorem}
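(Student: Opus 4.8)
The plan is to characterize the Nash equilibria of $\gameA$ as the solutions of a variational inequality (VI) and then read off properties (a)--(c) from the monotonicity of the associated game map. With $\bm{\delta}$ held fixed, the term $M(\bm{\delta}_n)$ in~\eqref{eq:indiv_cost} is a constant and plays no role in each player's minimization, while the rest of $f_n$ is a convex quadratic in $\bm{l}_n$ (the coefficient of $l_n(h)^2$ is $K_h>0$). Stacking the individual gradients into the pseudo-gradient map $F(\bm{l})=\big[\nabla_{\bm{l}_n}f_n(\bm{l},\bm{\delta})\big]_{n\in\mathcal{D}}$, the NE of $\gameA$ coincide with the solutions of $\mathrm{VI}(\Omega_{\bm{l}},F)$, where $\Omega_{\bm{l}}=\prod_{n}\Omega_{\bm{l}_n}$ is compact and convex by assumption and $F$ is affine, hence continuous.

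First I would establish part (a). Existence is immediate from the standard VI existence result: a continuous map on a nonempty compact convex set always admits a solution. For the convexity and compactness of the whole solution set, I would verify that $F$ is monotone. Writing $s_n(h)=l_n(h)+\delta_n(h)$, a short computation gives $\partial f_n/\partial l_n(h)=K_h\big(s_n(h)+\sum_{m\in\mathcal{D}}s_m(h)\big)$, so the Jacobian of $F$ is block diagonal across time slots, the $h$-th block being $K_h\,(I+\bm{1}\bm{1}^T)$. Since $I+\bm{1}\bm{1}^T\succ0$ and $K_h>0$, this Jacobian is symmetric positive definite, so $F$ is (strongly) monotone, and the solution set of a monotone VI over a compact convex set is nonempty, compact and convex. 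Equivalently, one may notice that $\gameA$ is an exact potential game with the convex potential $P(\bm{l})=\sum_{h\in H}K_h\big(\tfrac12\sum_{m}s_m(h)^2+\tfrac12(\sum_{m}s_m(h))^2\big)$, whose minimizers over $\Omega_{\bm{l}}$ are exactly the equilibria.

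Part (b) follows from the same computation: because the Jacobian is positive definite, $P$ is strictly convex and its minimizer over $\Omega_{\bm{l}}$ is unique, so $\gameA$ has a single NE and the payoff values are trivially the same at every equilibrium; if one prefers not to invoke uniqueness, the equal-value property also drops out of monotone-VI theory, since the potential (and hence each $f_n$, once the aggregate is pinned down) is constant on the solution set. For part (c), I would invoke the framework of~\cite{Scutari2012}: monotonicity of $F$ is precisely the hypothesis under which the distributed projection-type iterations proposed there converge to a solution of $\mathrm{VI}(\Omega_{\bm{l}},F)$, and the strong monotonicity established above even yields geometric convergence for a suitable step size; the separability of $\Omega_{\bm{l}}$ across users makes the updates fully distributed.

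The main obstacle I anticipate is the monotonicity verification, i.e.\ handling the coupling that the aggregate load $L(h)$ induces between players: the cross-derivatives $\partial^2 f_n/\partial l_n(h)\,\partial l_m(h)=K_h$ are nonzero, so the off-diagonal blocks of the Jacobian do not vanish, and the positivity argument rests on the observation $I+\bm{1}\bm{1}^T\succ0$. A secondary technical point is matching the precise convergence hypotheses of~\cite{Scutari2012} (the required contraction/P-matrix condition on the per-player Jacobian blocks) to the constants $\min_h K_h$ and $\max_h K_h$ arising here, so that the advertised distributed algorithm is guaranteed to converge.
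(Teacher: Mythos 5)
Your core route is the paper's route: both characterize the NE of $\gameA$ as solutions of a variational inequality over the product set $\prod_n\Omega_{\bm{l}_n}$, compute the same pseudo-gradient Jacobian, and invoke \cite{Scutari2012} for existence, the structure of the solution set, and convergence of best-response-type schemes. Two things differ. First, your monotonicity argument is actually tighter than the paper's: the paper records the blocks $\nabla^2_{\bm{l}_n,\bm{l}_n}f_n=\diag(\{2K_h\})$ and $\nabla^2_{\bm{l}_m,\bm{l}_n}f_n=\diag(\{K_h\})$ and concludes positive semidefiniteness of the full Jacobian "therefore," which is a non sequitur for general block matrices, whereas your reordering by time slot into $K_h(I+\bm{1}\bm{1}^T)\succ 0$ is the correct way to close that step. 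Second, your exact-potential observation $P(\bm{l})=\sum_h K_h\big(\tfrac12\sum_m s_m(h)^2+\tfrac12(\sum_m s_m(h))^2\big)$ is a genuinely different (and cleaner) device for parts (a) and (b); the paper instead argues (b) rather loosely by noting that opponents enter each $f_n$ only through the aggregate $L(h)$. The one point of tension is your conclusion that the NE is \emph{unique}: the paper deliberately claims only monotonicity and explicitly entertains multiple equilibria, because $\bm{l}_n$ is intended to be a convex (typically affine) image of underlying generation/storage strategies as in \cite{Atzeni2012} --- this is why the paper's proof invokes the scalar composition rule for convexity --- and composing with such a map degrades your strong monotonicity to mere monotonicity in the true decision variables, so equilibria need not be unique and (b) is not vacuous. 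Your potential-game argument still rescues (b) in that general setting: since $P$ is strictly convex in $\bm{l}$, all minimizers of $P\circ g$ share the same load profile $\bm{l}$, hence the same payoff values, even when the underlying strategies differ; it would be worth stating (b) that way rather than resting it on uniqueness.
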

\begin{proof}
See Appendix \ref{ap:proof-th1}.
\end{proof}

Convergent algorithms to reach such NE solutions are described in \cite{Scutari2012}, which depend specifically on the convex structure of the game.
In particular, they require a single-loop best response algorithm if the game is strongly monotone, or a double-loop best-response algorithm if it is just monotone (with an added proximity term). 
The specific steps, which are similar to those described in \cite[Alg.1]{Atzeni2012} for their particular model, but including fixed error terms, are as follows. 
First, users determine their respective~$\bm{l}_n$ based on a day-ahead optimization process (considering~$\bm{\delta}$ fixed), where every user knows their energy consumption requirements and energy costs, and calculate their best strategies depending on the energy prices. 
Then, all users communicate their intended energy load profile~$\bm{l}_n$ to the central unit and energy prices are recalculated. 
This procedure is repeated iteratively until all users converge to an NE. 

\section{Worst-Case Analysis of the Energy Load Profiles}
\label{sec:worstcase}
In the formulation of $\gameA$ we implicitly assumed that $\bm{\delta}$ is given or known a priori, or otherwise is set to zero. However, these error profiles are in fact unknown random terms that we added in order to correct any deviation from the real performance.
In this section, these error terms will be treated as optimization variables, and analyzed from a worst-case performance perspective.
This allows the regulation authority to jointly consider both the price and management/availability costs of energy.
With this idea in mind, it is realistic to consider bounded error terms, represented by
\begin{IEEEeqnarray}{r'l}
	\Delta_h \triangleq \{\bm{\delta}(h)\in\mathbb{R}^{|\mathcal{D}|}\ |\ \|\bm{\delta}(h)\|^2_2\leq\alpha(h)\} & \forall h\in H
\label{eq:quad_constraint}
\end{IEEEeqnarray}
where $\alpha(h)$ are prefixed in advance and depend on the time-slot $h$ due to more or less confidence in the regulator predictions.
In practice, these values can be inferred by the regulation authority, based on historic data from consumers and expected variations.
Quadratic constraints on the error terms are common in the literature, see e.g. \cite{Cumanan2008,Wang2011}.

In order to propose this error model, we have assumed two conditions that are satisfied generally in practice: \emph{i)}~independence of the error terms among users, and \emph{ii)}, no knowledge of the specific probabilistic error profiles of users. 
Property \emph{i)} is satisfied if we assume that users alter their predicted energy profiles independently (e.g. a user decides to charge an electric vehicle for an unexpected event). 
The energy profiles of different users can be correlated, but what we assume to be independent are the deviation errors from these users. 
Property \emph{ii)} implies that if we do not have any knowledge of the error distribution of a specific user, then it is sensible to substitute such profile with  the error distribution of an average user profile. 
By the central limit theorem such average user profile will approximate a Gaussian distribution. 
Then, the error contribution of all users can be modeled as a Gaussian distribution of zero mean, and variance the sum of all user variances. Specifically, equation~\eqref{eq:quad_constraint} represents an upper bound to the overall network error variance.

In the following, we analyze a scenario that considers that every player has to account individually for their worst-case error terms and is charged accordingly. The model assumes that each player is responsible for some maximum error profiles in joint consideration with the other players. 

Before the analysis, we bring up a useful result that guarantees global optimality for a maximization problem with quadratic convex objectives and constraints.
\begin{theorem}
\label{th:globalmax}
(from \cite[Th.3]{Hiriart-Urruty2001}). Given the problem
\begin{IEEEeqnarray}{l}
\begin{IEEEeqnarraybox}[][c]{r'l}
\max_{x\in\mathbb{R}^n} &\frac{1}{2}x^TAx+b^Tx+c \\
	\text{s.t.} & \frac{1}{2}x^TQx+dx+e\leq 0
\end{IEEEeqnarraybox}
\label{th:square-standard-prob}
\end{IEEEeqnarray}
where $A\neq 0,Q \in \mathbb{R}^{n \times n}$ are positive semidefinite matrices,
$b,d\in \mathbb{R}^n$ and $c,e\in \mathbb{R}$, and Slater's condition is satisfied: then $\bar{x}$ is a global maximizer iff there exists $\bar{\lambda}\geq 0$ satisfying
\begin{IEEEeqnarray}{l}
		A\bar{x}+b=\bar{\lambda}(Q\bar{x}+d)    \label{eq:kkt_cond}  \\
		-A+\bar{\lambda}Q \quad \text{is positive semidefinite}.   \label{eq:posdef_cond}
\end{IEEEeqnarray}
\end{theorem}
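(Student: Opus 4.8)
The plan is to read \eqref{th:square-standard-prob} as the stationarity (KKT) system of the maximization together with a \emph{global} curvature condition, and to prove the two implications separately: sufficiency by a direct quadratic identity, and necessity by invoking the S-procedure (S-lemma). Throughout, write $G(x)=\tfrac12 x^{T}Ax+b^{T}x+c$ for the objective and $H(x)=\tfrac12 x^{T}Qx+d^{T}x+e$ for the constraint, so that $\nabla G(x)=Ax+b$ and $\nabla H(x)=Qx+d$. A preliminary observation organizes everything: since $A\succeq 0$ and $A\neq 0$, the objective $G$ is convex and nonconstant, hence cannot attain a maximum at an interior point of the convex feasible set; so at any maximizer $\bar x$ the constraint is active, $H(\bar x)=0$, and complementary slackness holds automatically. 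Note also that \eqref{eq:posdef_cond} forces $\bar\lambda Q\succeq A\succeq 0$, so $\bar\lambda>0$ necessarily.

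For sufficiency, suppose $\bar x$ is feasible with $H(\bar x)=0$ and admits $\bar\lambda\ge 0$ satisfying \eqref{eq:kkt_cond}--\eqref{eq:posdef_cond}. I would expand both quadratics exactly around $\bar x$ and eliminate $\nabla H(\bar x)$ using \eqref{eq:kkt_cond}, obtaining the identity
\begin{equation*}
G(x)-G(\bar x)=\bar\lambda\bigl(H(x)-H(\bar x)\bigr)+\tfrac12\,(x-\bar x)^{T}(A-\bar\lambda Q)(x-\bar x).
\end{equation*}
For any feasible $x$ the first term is $\le 0$ because $\bar\lambda\ge 0$, $H(x)\le 0$ and $H(\bar x)=0$, while the second term is $\le 0$ by \eqref{eq:posdef_cond}; hence $G(x)\le G(\bar x)$ and $\bar x$ is a global maximizer. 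This direction is purely algebraic and uses convexity of neither function.

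The substance lies in necessity. Given a global maximizer $\bar x$, the quadratic $G(\bar x)-G(x)$ is nonnegative on $\{H\le 0\}$. Viewing this as the primal problem of minimizing $G(\bar x)-G(x)$ subject to $H(x)\le 0$ (value $0$, attained at $\bar x$), its Lagrangian is $\bigl[G(\bar x)-G(x)\bigr]+\mu H(x)$ with $\mu\ge 0$; because Slater's condition holds, the S-lemma gives strong duality for this single-quadratic-constraint problem, so there exists $\mu\ge 0$ with $G(\bar x)-G(x)+\mu H(x)\ge 0$ for \emph{all} $x\in\mathbb{R}^{n}$. Expanding this nonnegative quadratic around $\bar x$ (using $H(\bar x)=0$) produces a quadratic in $y=x-\bar x$ whose nonnegativity for every $y$ forces simultaneously its linear part to vanish---which is exactly the stationarity \eqref{eq:kkt_cond} with $\bar\lambda=\mu$---and its Hessian $-A+\mu Q$ to be positive semidefinite, which is \eqref{eq:posdef_cond}.

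The main obstacle is precisely the necessity of the \emph{global} semidefinite condition \eqref{eq:posdef_cond}. Ordinary second-order necessary conditions would only deliver $A-\bar\lambda Q\preceq 0$ on the critical cone, which is strictly weaker and certifies local rather than global optimality; upgrading this to a statement on all of $\mathbb{R}^{n}$ is exactly what the S-lemma provides, and it is the only nontrivial external ingredient. Verifying its hypotheses here reduces to the assumed Slater condition, while the active constraint $H(\bar x)=0$ and $\bar\lambda>0$ established above keep the certificate consistent with complementary slackness.
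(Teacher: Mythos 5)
The paper does not actually prove this statement: Theorem~\ref{th:globalmax} is imported by citation from \cite[Th.3]{Hiriart-Urruty2001} and used as a black box, so there is no in-paper argument to compare against. Your proof is the standard one for results of this type and essentially the argument of the cited reference: the exact expansion $G(x)-G(\bar x)=\bar\lambda\bigl(H(x)-H(\bar x)\bigr)+\tfrac12 (x-\bar x)^{T}(A-\bar\lambda Q)(x-\bar x)$ for sufficiency, and the S-lemma (applicable because there is a single quadratic constraint and Slater's condition holds) to upgrade the second-order information from the critical cone to all of $\mathbb{R}^{n}$ for necessity. Both halves are sound, the identity is correct, and your identification of the S-lemma as the one genuinely nontrivial external ingredient is exactly right.

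One point you should state explicitly rather than slip in: the theorem as transcribed in the paper omits complementary slackness ($\bar\lambda H(\bar x)=0$, equivalently $H(\bar x)=0$, since \eqref{eq:posdef_cond} together with $A\neq 0$ forces $\bar\lambda>0$). Your sufficiency argument assumes $H(\bar x)=0$ as a hypothesis, but your preliminary observation only establishes activity at points already known to be maximizers, which is the wrong direction for that implication. Without activity the ``if'' direction is false as literally stated: take $n=1$, $A=Q=2$, $b=d=c=0$, $e=-1$, so $G(x)=x^{2}$ and the constraint is $x^{2}\le 1$; the feasible point $\bar x=0$ with $\bar\lambda=1$ satisfies \eqref{eq:kkt_cond} ($0=0$) and \eqref{eq:posdef_cond} ($-2+2\ge 0$), yet $G(0)=0<1=G(\pm 1)$. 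So what you prove is the corrected statement --- the one actually in \cite{Hiriart-Urruty2001}, which lists $\bar\lambda H(\bar x)=0$ among the conditions; say so explicitly, and the proof is complete.
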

Note that \eqref{eq:kkt_cond} is a KKT necessary condition, plus a second requirement \eqref{eq:posdef_cond} that will guarantee global maximum.

\vspace{-1em}
\subsection{Worst-Case Min-max Game Formulation} 
\label{sec:worstcase-game}
In this section we consider a worst-case situation where the error terms are coupled among all of the users.
We analyze which are the user's best response strategies and how to calculate the worst-case error terms.
In particular, this transforms $\gameA$ into a game where all users have to solve a min-max cost function, with coupled interactions and constraints.
The new game formulation $\gameB$, where each user $n$ has to determine variables~$\bm{l}_n$ and $\bm{\delta}_n$, is given by
\begin{IEEEeqnarray*}{c'l'l}
	\gameB: & \left\{  \,
\begin{IEEEeqnarraybox}[][c]{r'l}
	\min_{\bm{l}_n} & \max_{\bm{\delta}_n} f_n(\bm{l}_n,\bm{l}_{-n},\bm{\delta}_n,,\bm{\delta}_{-n})  \\
	\text{s.t.} 
			    & \bm{\delta}(h)\in \Delta_h,\quad \forall h\in H  \vspace{-0.5em}\\
			    & \bm{l}_n\in \Omega_{\bm{l}_n}   
\end{IEEEeqnarraybox}  \right. & \forall n\in\mathcal{D},
\end{IEEEeqnarray*}
where region $\Omega_{\bm{l}_n}$ is convex, compact and nonempty. 
The min-max of game $\gameB$ for every user $n\in \mathcal{D}$ represents a game with a convex objective function in variables $\bm{l}_n$ and convex regions. 
Note that for the specific case that $M(\bm{\delta}_n)$ is quadratic and convex in variable $\bm{\delta}_n$, then the maximization problem falls into the category to readily use Theorem \ref{th:globalmax}, since all equations are quadratic.
More generally, we can provide the following theorem for any continuous function $M(\bm{\delta}_n)$:

\begin{theorem}
\label{th-NE-existence-gameB}
The following holds for game $\gameB$ with min-max objectives:
\begin{enumerate}[label=\alph*)]
\item It has a nonempty, compact and convex solution set.
\item It is monotone in variables $\bm{l}_n$, and NE can be reached through algorithms of the kind proposed in \cite{Scutari2012}.
\end{enumerate}
\end{theorem}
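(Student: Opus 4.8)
The plan is to dispatch both claims by reducing the min-max game to an ordinary convex game in the variables $\bm{l}_n$ alone, after resolving the inner worst-case maximization. First I would examine the inner problem $\max_{\bm{\delta}_n} f_n$ for a fixed profile $\bm{l}$ and fixed $\bm{\delta}_{-n}$. Inspecting \eqref{eq:indiv_cost}, the cross term $\bm{1}^T\bm{\delta}(h)\,\delta_n(h)$ contributes a positive coefficient $K_h$ on $\delta_n(h)^2$, so $f_n$ is \emph{convex} (not concave) in $\bm{\delta}_n$; the inner maximization is therefore a non-convex problem whose optimum sits on the boundary of the feasible set carved out of $\Delta_h$. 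This is precisely the setting of Theorem~\ref{th:globalmax}: when $M(\bm{\delta}_n)$ is quadratic and convex, both the objective and the per-slot constraint $\|\bm{\delta}(h)\|_2^2\le\alpha(h)$ are quadratic, so I would invoke \eqref{eq:kkt_cond}--\eqref{eq:posdef_cond} to obtain a certified global maximizer $\bm{\delta}_n^{\ast}$ together with a multiplier, yielding the equivalent parametrized game $\gameBt$. For a general continuous $M$, compactness of $\Delta_h$ and continuity of $f_n$ ensure by Weierstrass that the maximum is attained, so the worst-case value $\tilde{f}_n(\bm{l})\triangleq\max_{\bm{\delta}_n}f_n$ is well defined.

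For part a) I would then observe that $\tilde{f}_n$ inherits convexity in $\bm{l}_n$: since $f_n$ is convex in $\bm{l}_n$ for every fixed $\bm{\delta}_n$ (again from the $K_h>0$ term, now quadratic in $l_n(h)$), the pointwise supremum over $\bm{\delta}_n$ remains convex in $\bm{l}_n$. Joint continuity of $\tilde{f}_n$ follows from Berge's maximum theorem using compactness of the feasible $\bm{\delta}$-set. With each $\Omega_{\bm{l}_n}$ convex, compact and nonempty and each reduced cost $\tilde{f}_n$ continuous and convex in its own variable, existence of an NE of the reduced game follows from the same variational-inequality / Debreu--Glicksberg--Fan argument used for Theorem~\ref{th:theorem-gameA}. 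Nonemptiness, compactness and convexity of the solution set then follow from the monotone-VI structure established next, exactly as in Theorem~\ref{th:theorem-gameA}a).

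For part b), I would establish monotonicity in $\bm{l}$ by the same computation as for $\gameA$: once $\bm{\delta}^{\ast}$ is frozen at its worst-case value, the $\bm{l}$-dependence of $f_n$ has the same form as the cost in $\gameA$ (a quadratic coupling through $L(h)$ with positive weights $K_h$), so the pseudo-gradient map in $\bm{l}$ has a positive-semidefinite symmetric part and is thus monotone. Consequently the reduced game meets the hypotheses of \cite{Scutari2012}: a single-loop best-response scheme when the map is strongly monotone, or a proximal double-loop scheme in the merely monotone case, and convergence to an NE follows directly.

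The main obstacle I anticipate is the inner worst-case step. Because the maximization is over a convex (hence non-concave) objective, Sion's minimax theorem does not apply and the subproblem is genuinely non-convex; the argument hinges entirely on Theorem~\ref{th:globalmax} to certify a \emph{global} maximizer and to expose the multiplier defining $\gameBt$. A secondary subtlety is the coupling of the constraints: fixing $\bm{\delta}_{-n}$ shrinks player $n$'s admissible value to the interval $\delta_n(h)^2\le\alpha(h)-\sum_{m\ne n}\delta_m(h)^2$, so I would need to check that this reduced feasible set is nonempty at equilibrium and that $\tilde{f}_n$ stays continuous across the coupling before the convexity and monotonicity arguments apply cleanly.
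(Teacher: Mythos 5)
Your overall route is the same as the paper's: the inner maximum is attained by Weierstrass, the value function $\tilde f_n(\bm{l})=\max_{\bm{\delta}_n}f_n$ is convex in $\bm{l}_n$ as a pointwise supremum of convex functions, existence follows from the variational-inequality machinery of \cite{Scutari2012}, and monotonicity is inherited from the Jacobian computation of Theorem~\ref{th:theorem-gameA} with $\bm{\delta}_n$ replaced by the maximizer $\bm{\delta}_n^\ast$. There is, however, one missing ingredient in your part~b): you say that once $\bm{\delta}^\ast$ is ``frozen'' at its worst-case value, the $\bm{l}$-dependence of $f_n$ has the same form as in $\gameA$. But $\bm{\delta}_n^\ast$ is itself a function of $\bm{l}$, so it is not automatic that the pseudo-gradient of the reduced cost $\tilde f_n$ equals $\nabla_{\bm{l}_n}f_n(\bm{l},\bm{\delta}_n^\ast,\bm{\delta}_{-n})$ with the dependence of $\bm{\delta}_n^\ast$ on $\bm{l}$ ignored. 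The step that licenses this is Danskin's theorem \cite[Sec.~B.5]{Bertsekas1999}, which the paper invokes explicitly: when the inner maximizer is unique, $\tilde f_n$ is differentiable and its gradient is exactly the partial gradient of $f_n$ evaluated at $\bm{\delta}_n^\ast$; when it is not unique one only obtains a subdifferential and the variational inequality must be replaced by a multivalued one. Berge's maximum theorem, which you cite, gives continuity of $\tilde f_n$ but not this envelope identity, and without it both the monotonicity claim in part~b) and the convexity of the solution set in part~a) are unsupported.

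A secondary remark: your detour through Theorem~\ref{th:globalmax} to certify a \emph{global} maximizer of the non-concave inner problem is not needed for this statement. Convexity of $\tilde f_n$ in $\bm{l}_n$ and attainment of the maximum hold regardless of whether the maximizer can be characterized; the global-optimality certificate only becomes relevant later, for the equivalence of $\gameB$ with $\gameBt$ in Lemma~\ref{lemma:game-equivalence}. Your observation that fixing $\bm{\delta}_{-n}$ shrinks player $n$'s admissible $\bm{\delta}_n$-set through the coupled constraint is a fair point that the paper itself glosses over, but it does not threaten attainment, since that reduced set remains compact and nonempty for any feasible $\bm{\delta}_{-n}$.
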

\begin{proof}
The max operation inside the objective of each user in game $\gameB$ is performed over a convex and compact set and, therefore, it is well defined (it always has a solution).
The maximum function in~$\gameB$ preserves convexity of the objective in variable $\bm{l}_n$.
Furthermore, if the point that maximizes the objective is unique, then the objective is differentiable in $\bm{l}_n$ with gradient $\nabla_{\bm{l}_n}f_n(\bm{l}_n,\bm{l}_{-n},\bm{\delta^\ast}_n,\bm{\delta}_{-n})$, and where $\bm{\delta}_n^\ast=\arg\max_{\bm{\delta}_n}f_n(\bm{l},\bm{\delta}_n,\bm{\delta}_{-n})$ denotes the maximal point.
This property is guaranteed by Danskin's Theorem \cite[Sec.~B.5]{Bertsekas1999}.
With this analysis in mind, the required assumptions from~\cite[Th.4.1.a]{Scutari2012} are satisfied and an NE exists.
Furthermore, the Jacobian analysis performed for Theorem \ref{th:theorem-gameA} (see Appendix~\ref{ap:proof-th1}) remains the same in $\gameB$, with the exception of substituting the fixed $\bm{\delta}_n$ with the optimal solution of the max problem $\bm{\delta}_n^\ast$.
Likewise, we can claim that the game is monotone and, therefore, that the solution set is also convex \cite[Th.4.1.b]{Scutari2012}.
By Danskin's theorem, if the maximum is not a unique point then the gradient would become a subdifferential, and slight variations would have to be considered that include the use of multifunctions \cite{Facchinei2010}.
\end{proof}

Game $\gameB$ falls into a rather novel category of min-max problems recently analyzed in \cite{Facchinei2014}.
Such reference proposes an alternative formulation for min-max games, where extra players are added to solve the maximization parts to decouple the min-max form.
Moreover, reference~\cite{Facchinei2014} proves equivalence of NE for both formulations, and proposes solutions under monotonicity and other requirements.
However, our problem $\gameB$ falls into a more general form with individual max objectives (rather than a common one), where the monotonicity property does not hold.
Therefore, the approach from~\cite{Facchinei2014} becomes unsuitable for distributed algorithms.
Furthermore, $\gameB$ incorporates global constraints which are not considered in the previous reference.
For this reason, we have to develop a novel approach for this particular problem.

In order to analyze and solve the coupled maximization problems for all players, we first formulate an equivalent game $\gameBt$, which is more tractable, together with a nonlinear complementarity problem (NCP) indicated by~\eqref{eq:wholeworst-tilde}.
The NCP is required to establish the equivalence together with other assumptions on the dual variables $\lambda_h$.
We have also particularized the cost term to $M(\bm{\delta}_n)=\betamm \Vert\bm{\delta}_n\Vert^2_2$ with $\betamm\geq 0$, which is motivated to penalize the user's local deviations.
Furthermore, this particular choice will allow us to obtain tractable results for the game and the NCP:
\begin{IEEEeqnarray}{l'l}
\gameBt:  \left\{  \,
\begin{IEEEeqnarraybox}[][c]{r'l}
	\min_{\bm{l}_n} & \max_{\bm{\delta}_n}\: p_n(\bm{l}_n,\bm{l}_{-n},\bm{\delta}_n,\bm{\delta}_{-n},\bm{\lambda}) \\
	\text{s.t.} & \bm{l}_n\in \Omega_{\bm{l}_n}   
\IEEEstrut
\end{IEEEeqnarraybox} \right. \quad \forall n\in\mathcal{D} \IEEEnonumber \\
0 < K_h+\betamm\leq \lambda_h \perp -\Big(\|\bm{\delta}(h)\|^2_2-\alpha(h)\Big) \geq 0,\;\forall h
\label{eq:wholeworst-tilde}
\end{IEEEeqnarray}
where $\bm{\lambda}=[\lambda_1,\ldots,\lambda_{|H|}]^T$, $a\perp b$ indicates $a^T b=0$, and
\begin{multline}
	p_n(\bm{l}_n,\bm{l}_{-n},\bm{\delta}_n,\bm{\delta}_{-n},\bm{\lambda}) = 
		\sum_{h\in H} (\betamm - \lambda_h) \delta_n(h)^2   \\
		+\sum_{h\in H} K_h\Big(L(h)+\sum_{k\in \mathcal{D}}\delta_k(h)\Big)(l_n(h)+\delta_n(h)).
\label{eq:minmax-game-pn}
\end{multline}
Next lemma shows the equivalence between $\gameB$ and $\gameBt$.
\begin{lemma}
\label{lemma:game-equivalence}
Games $\gameB$ and $\gameBt$ together with the NCP specified by equation \eqref{eq:wholeworst-tilde} present the same NE solutions.
\end{lemma}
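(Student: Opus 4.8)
The plan is to prove the two solution sets coincide by characterizing the inner maximization of $\gameB$ through Theorem~\ref{th:globalmax} and then matching the resulting optimality system, term by term, with the conditions defining $\gameBt$ together with the NCP~\eqref{eq:wholeworst-tilde}. The first observation I would record is that, fixing $\bm{l}$ and the rivals' errors $\bm{\delta}_{-n}$, user $n$'s inner problem $\max_{\bm{\delta}_n} f_n$ subject to $\|\bm{\delta}(h)\|^2_2\le\alpha(h)$ fully decouples across time slots: with $M(\bm{\delta}_n)=\betamm\|\bm{\delta}_n\|^2_2$, both $f_n$ and each ball constraint depend on $\delta_n(h)$ only through the single scalar at slot $h$. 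Each scalar subproblem is the maximization of a strictly convex quadratic (the coefficient of $\delta_n(h)^2$ is $K_h+\betamm>0$) over an interval, so Theorem~\ref{th:globalmax} applies verbatim with a single quadratic constraint, Slater holding because $\alpha(h)>0$ leaves a strictly feasible interior point. Since a strictly convex function attains its maximum at a boundary point, the constraint is always active, i.e.\ $\|\bm{\delta}(h)\|^2_2=\alpha(h)$ at any maximizer; this is precisely what makes a strictly positive multiplier admissible.

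Next I would write the two conditions of Theorem~\ref{th:globalmax} for this subproblem and read off their meaning. Identifying the multiplier of the slot-$h$ constraint with $\lambda_h$, and writing $S_{-n}(h)=L(h)+\sum_{k\neq n}\delta_k(h)$, the stationarity condition~\eqref{eq:kkt_cond} becomes $2(K_h+\betamm)\delta_n(h)+K_h\big(l_n(h)+S_{-n}(h)\big)=2\lambda_h\delta_n(h)$, while the positive-semidefiniteness condition~\eqref{eq:posdef_cond} reduces to the scalar inequality $\lambda_h\ge K_h+\betamm$. A direct differentiation shows that $\nabla_{\bm{\delta}_n}p_n=0$, with $p_n$ as in~\eqref{eq:minmax-game-pn}, is exactly~\eqref{eq:kkt_cond}; moreover $\lambda_h\ge K_h+\betamm>\betamm$ makes the coefficient $\betamm-\lambda_h$ of $\delta_n(h)^2$ in $p_n$ strictly negative, so $p_n$ is strictly concave in $\bm{\delta}_n$ and its \emph{unconstrained} maximizer is well defined and characterized by that same stationarity. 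Hence the two bounds of the NCP are not ad hoc: the lower bound $\lambda_h\ge K_h+\betamm$ is the global-optimality certificate~\eqref{eq:posdef_cond}, and the complementarity $\lambda_h\perp-(\|\bm{\delta}(h)\|^2_2-\alpha(h))$ encodes feasibility plus the (automatically active) constraint, the active-sphere fact reconciling it with $\lambda_h>0$.

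With the inner problems matched, I would close the equivalence on the outer minimization. Since $f_n$ and $p_n$ differ only in the $\delta_n(h)^2$ terms, which do not involve $\bm{l}_n$, their partial gradients with respect to $\bm{l}_n$ coincide, and by Danskin's theorem the gradient of each value function $\max_{\bm{\delta}_n}(\cdot)$ with respect to $\bm{l}_n$ equals this common partial gradient evaluated at the shared maximizer $\bm{\delta}_n^\ast$ (unique whenever $K_h(l_n(h)+S_{-n}(h))\neq0$, the degenerate tie being covered by the subdifferential caveat already noted for Theorem~\ref{th-NE-existence-gameB}). Therefore the best-response conditions in $\bm{l}_n$ over the common feasible set $\Omega_{\bm{l}_n}$ are identical in the two games. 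Assembling both implications yields the claim: from an NE of $\gameB$ I recover, via Theorem~\ref{th:globalmax}, multipliers satisfying~\eqref{eq:wholeworst-tilde} and a point solving $\gameBt$; conversely, any solution of $\gameBt$ satisfying~\eqref{eq:wholeworst-tilde} delivers, through the same two conditions, a global inner maximizer and an outer best response for $\gameB$.

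I expect the main obstacle to be the coupling through the shared constraint. The subtlety is that $\gameB$ is a generalized Nash problem whose common constraint $\|\bm{\delta}(h)\|^2_2\le\alpha(h)$ could in principle carry a different multiplier for each player, whereas~\eqref{eq:wholeworst-tilde} posits a single $\lambda_h$ per slot; the equivalence therefore targets the variational (common-multiplier) equilibria. I would justify this by noting that activeness is simultaneous for all players, since the full error vector lies on the sphere $\|\bm{\delta}(h)\|^2_2=\alpha(h)$, so one slot multiplier consistently certifies global optimality of every player's inner problem at once. Verifying this consistency, together with the careful per-slot invocation of Theorem~\ref{th:globalmax} and of Danskin's theorem for the outer gradient, is where the real work lies; the remaining algebra is routine.
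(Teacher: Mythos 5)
Your proof is correct and follows essentially the same route as the paper's: both rest on showing that the KKT/optimality systems of $\gameB$ and of $\gameBt$ together with the NCP~\eqref{eq:wholeworst-tilde} coincide, with the bound $\lambda_h\ge K_h+\betamm$ serving simultaneously as the global-maximum certificate of Theorem~\ref{th:globalmax} for the non-concave inner problem of $\gameB$ and as the concavity condition for $p_n$ in $\gameBt$. You simply carry out in explicit detail (per-slot decoupling, term-by-term stationarity matching, the Danskin argument for the outer minimization, and the variational/common-multiplier caveat that the paper only raises after the lemma) what the paper's three-sentence proof asserts.
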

\begin{proof}
The KKT system of equations of $\gameB$ and those of game $\gameBt$ jointly with the NCP are the same, and therefore, any NE candidate solution can be derived from either formulation. 
A maximum point in both problems exists since all regions are compact. Inequality $\lambda_h\geq K_h+\betamm$ guarantees that such point is maximum because Theorem~\ref{th:globalmax} is satisfied for every user in $\gameB$ and, likewise, is maximum in $\gameBt$ because it  makes the objective concave in $\bm{\delta}_n$. 
Therefore, the NE coincide.
\end{proof}

Note that, in this equivalent formulation of $\gameBt$ we have focused on the special case where all $\lambda_h$ are equal for all players for every~$h\in H$. 
The equilibrium points of this formulation are normally referred to as ``variational" or ``normalized" solutions~\cite{Facchinei2010a}, and the practical implication is that they establish a common price on a resource for all agents. 
These solutions also retain some stability properties compared to other equilibrium points where the dual variables are different among users, see~\cite{Facchinei2010a} for further details.

One benefit from dealing with the new formulation $\gameBt$ is that the {\em strong max-min property} is directly fulfilled if the objective is {\em convex-concave}, as opposed to $\gameB$ where it remains {\em convex-convex} (see \cite[Ex.3.14b]{Boyd2004} for further details). 
This implies that the min-max can be changed into a max-min and the solution is not altered, i.e., equivalence of both problems holds. 
We see this is true as long as $\lambda_h\geq K_h+\betamm$ because the objective is then concave in $\bm{\delta}_n$. This allows us to analyze the monotonicity of the new game setting~$\gameBt$ in these variables. Let us present a lemma that guarantees the existence of common $\lambda_h$'s satisfying the previous requirements:
\begin{lemma}
\label{lemma:lower-bound}
There always exists $\lambda_h$ satisfying \eqref{eq:wholeworst-tilde} (i.e., making $\gameBt$ concave in $\bm{\delta}_n$) that is dual optimal in $\gameB$. A lower bound to this value is:
$\lambda_h > \betamm+K_h$ for all $h\in H$.
\end{lemma}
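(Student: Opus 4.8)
The plan is to extract the bound directly from Theorem~\ref{th:globalmax}, once I have shown that the inner worst-case maximization of each user always activates the quadratic error constraint. Existence of a dual-optimal $\lambda_h$ common to all users is not the hard part: by Lemma~\ref{lemma:game-equivalence} every NE of $\gameB$ is characterized by the KKT system it shares with $\gameBt$ and the NCP~\eqref{eq:wholeworst-tilde}, and Theorem~\ref{th-NE-existence-gameB} already guarantees that such an NE (hence a collection of multipliers) exists; restricting attention to the variational solution fixes a single $\lambda_h$ per slot for all players. So the substance of the lemma is the two-sided claim that this $\lambda_h$ both satisfies $\lambda_h\geq\betamm+K_h$ (concavity) and in fact exceeds it strictly.

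First I would show the constraint is binding. With $M(\bm{\delta}_n)=\betamm\|\bm{\delta}_n\|_2^2$ the objective $f_n$ in~\eqref{eq:indiv_cost} separates across time-slots, and in each slot it is a scalar quadratic in $\delta_n(h)$ whose leading coefficient is $K_h+\betamm>0$; that is, $f_n$ is strictly convex in the user's own error at every $h$. Maximizing a strictly convex function over the compact feasible interval $\{\delta_n(h)^2\leq\alpha(h)-\sum_{k\neq n}\delta_k(h)^2\}$ cannot produce an interior maximizer, so the optimum lies on the boundary and the aggregate constraint $\|\bm{\delta}(h)\|_2^2=\alpha(h)$ from~\eqref{eq:quad_constraint} is active. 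Consequently the multiplier attached to it is nonzero, which already rules out the complementarity branch $\lambda_h=0$. Next I would invoke Theorem~\ref{th:globalmax} on this per-slot maximization, with $A=2(K_h+\betamm)$ and $Q=2$ (both positive definite) and Slater's condition holding because $\alpha(h)>0$. The global-maximum certificate~\eqref{eq:posdef_cond}, $-A+\lambda_h Q\succeq 0$, then reads $\lambda_h\geq K_h+\betamm$, which is exactly the lower branch of the NCP~\eqref{eq:wholeworst-tilde} and is precisely the threshold that renders the quadratic coefficient $\betamm+K_h-\lambda_h$ of $\delta_n(h)^2$ in $p_n$ nonpositive, i.e.\ makes $\gameBt$ concave in $\bm{\delta}_n$.

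For the strict inequality I would return to the stationarity condition~\eqref{eq:kkt_cond}, which here gives $\lambda_h=(K_h+\betamm)+b/\big(2\bar{\delta}_n(h)\big)$, where $b$ is the linear coefficient of $\delta_n(h)$. Because a convex quadratic on a symmetric interval is maximized at the endpoint lying in the direction of its linear term, the boundary maximizer $\bar{\delta}_n(h)$ shares the sign of $b$, so the correction $b/\big(2\bar{\delta}_n(h)\big)$ is strictly positive and $\lambda_h>\betamm+K_h$. An equivalent and perhaps cleaner route is to note that for the unconstrained inner maximization in $\gameBt$ to be finite and to possess the unique maximizer demanded by Danskin's theorem in Theorem~\ref{th-NE-existence-gameB}, the objective must be \emph{strictly} concave in $\bm{\delta}_n$, forcing $\lambda_h>\betamm+K_h$ rather than mere equality.

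The main obstacle I anticipate is the degenerate configuration $b=0$ (a symmetric quadratic with coincident endpoint maximizers), where the bare KKT argument collapses to equality; there I would lean on the strict-concavity/uniqueness requirement, which still selects $\lambda_h>\betamm+K_h$. A secondary subtlety is that the per-player stationarity conditions a priori yield player-dependent multipliers, so asserting a \emph{single common} $\lambda_h$ relies on the variational structure already invoked in Lemma~\ref{lemma:game-equivalence}; I would make this explicit by arguing that as $\lambda_h\downarrow\betamm+K_h$ the induced aggregate error norm diverges while as $\lambda_h\to\infty$ it vanishes, so a continuity (intermediate-value) argument places the equilibrium value strictly inside $(\betamm+K_h,\infty)$ and matches $\|\bm{\delta}(h)\|_2^2=\alpha(h)$.
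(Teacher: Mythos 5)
Your central mechanism coincides with the paper's: both arguments extract the bound from the global-optimality certificate \eqref{eq:posdef_cond} of Theorem~\ref{th:globalmax}, which for the per-slot quadratic (with $A$ proportional to $K_h+\betamm$ and $Q$ to the identity) reads $\lambda_h \ge K_h+\betamm$. The paper's entire proof is just two steps: the inner maximization is over a nonempty compact set, so a global maximizer exists by the extreme value theorem, and Theorem~\ref{th:globalmax} then supplies a multiplier satisfying \eqref{eq:posdef_cond}. The superstructure you add is where the problems lie. Your existence argument leans on Lemma~\ref{lemma:game-equivalence} and on ``restricting attention to the variational solution,'' but Lemma~\ref{lemma:game-equivalence} already presupposes multipliers with $\lambda_h\ge K_h+\betamm$ (that inequality is the hypothesis of the NCP \eqref{eq:wholeworst-tilde}), and the existence of a variational solution is precisely what Theorem~\ref{th:monotone-gameBt}(b) later deduces \emph{from} this lemma; invoking either here is circular. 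Also, the fact that the norm constraint is active does not by itself make its multiplier nonzero (active constraints can carry zero multipliers); it is \eqref{eq:posdef_cond}, not activeness, that forces $\lambda_h>0$.

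On strictness: your stationarity computation $\lambda_h = K_h+\betamm + b/\big(2\bar\delta_n(h)\big)$ is correct and does give strict inequality whenever $b\ne 0$, but your patch for the degenerate case $b=0$ is logically reversed. Danskin's theorem does not \emph{require} a unique maximizer --- the proof of Theorem~\ref{th-NE-existence-gameB} explicitly falls back to subdifferentials when the maximizer is non-unique --- so you cannot infer strict concavity, and hence $\lambda_h>K_h+\betamm$, from a ``uniqueness requirement.'' In the degenerate configuration the certificate genuinely collapses to $\lambda_h=K_h+\betamm$, consistent with \eqref{eq:lambda-opt}. To be fair, the paper's own proof also only establishes the non-strict bound despite the strict inequality in the lemma's statement, so this is a defect you inherit rather than introduce; but your proposed fix does not close it, and the intermediate-value sketch at the end likewise presupposes $b\ne 0$ for the divergence as $\lambda_h\downarrow K_h+\betamm$.
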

\begin{proof}
See Appendix \ref{proof-theorem-monotoneBt}.
\end{proof}
Now, we can establish the following result.
\begin{theorem}
\label{th:monotone-gameBt}
The following holds:
\begin{enumerate}[label=\alph*)]
\item Game $\gameBt$ has a nonempty and compact solution set.
\item A variational solution of game $\gameB$ always exists.
\item If additionally $\lambda_h > \frac{1}{2}K_h(|D|+1)+\betamm$, then the game $\gameBt$ is strongly monotone in variables $\bm{\delta}_n$ for all $n\in \mathcal{D}$, it has a unique NE solution, and it can be reached by a distributed asynchronous best response algorithm of the kind proposed by~\cite[Alg.4.1]{Scutari2012}. \label{th4-c}
\end{enumerate}
\end{theorem}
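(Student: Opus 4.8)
The plan is to obtain parts (a) and (b) almost for free from the machinery already in place, and to concentrate the real effort on the strong-monotonicity estimate underlying part (c).

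For part (a), I would first invoke Lemma~\ref{lemma:lower-bound} to fix a common $\bm{\lambda}$ satisfying the NCP~\eqref{eq:wholeworst-tilde}, so that each inner maximization in $\gameBt$ is concave and the game is well posed. Lemma~\ref{lemma:game-equivalence} then identifies the NE set of $\gameBt$ (together with the NCP) with that of $\gameB$, while Theorem~\ref{th-NE-existence-gameB}(a) guarantees the latter is nonempty and compact; both properties transfer verbatim. For part (b), I would observe that $\gameBt$ is precisely the formulation in which the multipliers $\lambda_h$ are shared across all players, so any NE of $\gameBt$ corresponds to a \emph{variational} (normalized) equilibrium of $\gameB$; nonemptiness from part (a) then yields existence of such a solution.

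The core of part (c) is to show that, under $\lambda_h > \tfrac{1}{2}K_h(|\mathcal{D}|+1)+\betamm$, the pseudo-gradient of the $\bm{\delta}$-game is strongly monotone. Since $\gameBt$ is convex--concave once $\lambda_h\ge K_h+\betamm$, I would treat the inner maximization as a standalone concave Nash game in $\bm{\delta}$ with $\bm{l}$ held fixed, and study the map $F$ stacking the partials $\nabla_{\bm{\delta}_n} p_n$ of~\eqref{eq:minmax-game-pn}. First I would compute $\partial p_n/\partial \delta_n(h)$, obtaining a penalty term $2(\betamm-\lambda_h)\delta_n(h)$ together with a coupling term $K_h\big(l_n(h)+\delta_n(h)+L(h)+\sum_k\delta_k(h)\big)$. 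Differentiating once more, the Jacobian of $F$ turns out to be \emph{block diagonal across time slots} $h$ (because $l_n(h)$ and $L(h)$ are constant in $\bm{\delta}$ and only slot-$h$ variables enter the slot-$h$ component), with each symmetric block of the form
\begin{equation*}
J_h = \big(2(\betamm-\lambda_h)+K_h\big)\,I + K_h\,\bm{1}\bm{1}^T .
\end{equation*}

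The decisive step, and the main obstacle, is to extract the \emph{exact} threshold from a negative-definiteness requirement on $J_h$ (equivalently, positive definiteness of the minimization map $-F$). Using that $\bm{1}\bm{1}^T$ has eigenvalues $|\mathcal{D}|$ and $0$, the eigenvalues of $J_h$ are $2(\betamm-\lambda_h)+K_h$ (multiplicity $|\mathcal{D}|-1$) and $2(\betamm-\lambda_h)+K_h(|\mathcal{D}|+1)$ along $\bm{1}$. The binding one is the latter, and forcing it strictly negative gives precisely $\lambda_h > \tfrac{1}{2}K_h(|\mathcal{D}|+1)+\betamm$, making $J_h$ uniformly negative definite and hence $-F$ strongly monotone. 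I expect the only real subtlety to be bookkeeping the two diagonal contributions ($K_h$ from the self term and $2(\betamm-\lambda_h)$ from the penalty) so that the rank-one shift lands on the correct eigenvalue. Once strong monotonicity is in hand, uniqueness of the NE is immediate, and it is exactly the hypothesis under which the distributed asynchronous best-response scheme \cite[Alg.4.1]{Scutari2012} converges, so I would close by citing that result applied to $\gameBt$ in the $\bm{\delta}$ variables.
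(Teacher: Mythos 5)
Your treatment of part (c), which is the substance of the theorem, is essentially the paper's own argument: you arrive at the same per-slot Jacobian block $\big(2(\betamm-\lambda_h)+K_h\big)I + K_h\bm{1}\bm{1}^T$ of the stacked pseudo-gradient, correctly noting it is block diagonal across time slots, and you extract the identical threshold. The only difference is the final linear-algebra step: you diagonalize the rank-one perturbation (binding eigenvalue $2(\betamm-\lambda_h)+K_h(|\mathcal{D}|+1)$ along $\bm{1}$), whereas the paper invokes strict diagonal dominance with negative diagonal entries via \cite[Cor.7.2.3]{Horn2012}; for this matrix the two criteria give exactly the same bound, and your route has the mild advantage of exhibiting the threshold as tight. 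Where you genuinely diverge is part (a): the paper proves existence directly for $\gameBt$ by verifying assumptions A1--A3 and A5 of \cite[Th.1]{Nishimura1981}, with A5 (concavity of $p_n$ in $\bm{\delta}_n$) following from the Hessian $-2\diag(\{\lambda_h-K_h-\betamm\})$, and only afterwards transfers to $\gameB$ in part (b) through Lemmas~\ref{lemma:game-equivalence} and~\ref{lemma:lower-bound}. You run the transfer in the opposite direction, pulling nonemptiness and compactness back from Theorem~\ref{th-NE-existence-gameB}(a) through Lemma~\ref{lemma:game-equivalence}. That is formally admissible if Lemma~\ref{lemma:game-equivalence} is read as an exact equality of solution sets, but it is the weaker link: an NE of $\gameB$ produced by Theorem~\ref{th-NE-existence-gameB} need not a priori be supported by multipliers $\lambda_h$ \emph{common to all players}, which is precisely what the NCP~\eqref{eq:wholeworst-tilde} demands, so nonemptiness of the $\gameB$ solution set does not by itself hand you a point of $\gameBt$ plus the NCP; this is why the paper's proof remarks that $\gameB$ cannot satisfy condition A5 directly and why it establishes existence for $\gameBt$ first. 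I would either restructure (a) along the paper's lines or make explicit that you are relying on the shared-multiplier (variational) direction of the equivalence, which is what Lemma~\ref{lemma:lower-bound} supplies.
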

\begin{proof}
See Appendix \ref{proof-theorem-monotoneBt}
\end{proof}

In order to solve $\gameBt$ and the NCP described by~\eqref{eq:wholeworst-tilde}, we need to study the inner game formed from the coupled maximization problems in $\bm{\delta}$. In order to do that, we assume that $\bm{l}$ is fixed. Specifically, we analyze the following game:
\begin{IEEEeqnarray}{r'l} \left\{
\begin{IEEEeqnarraybox}[][c]{r'l}
	\max_{\bm{\delta}_n\in\mathbb{R}^{|H|}} & p_n(\bm{l},\bm{\delta}_n,\bm{\delta}_{-n},\bm{\lambda}) 
\end{IEEEeqnarraybox} \right. &  \forall n\in \mathcal{D}
\label{eq:inner-game}
\end{IEEEeqnarray}
where~\eqref{eq:inner-game} is solved jointly with the NCP~\eqref{eq:wholeworst-tilde}.
First, note that all users' objectives can be separated in $|H|$ independent problems.
This is possible because the objective function $p_n$ is expressed as a sum over $h\in H$ and~\eqref{eq:wholeworst-tilde} is expressed separately for every $h\in H$.

Each individual problem has a quadratic form and, hence, Theorem~\ref{th:globalmax} can be applied to each of them. 
We can transform the model given by~\eqref{eq:wholeworst-tilde} and \eqref{eq:inner-game} into a form resembling~\eqref{th:square-standard-prob}, and identify terms $A=K_h+\betamm$ and $Q=1$. 
We conclude that (\ref{eq:posdef_cond}) is satisfied if $\lambda_h\geq K_h+\betamm$. Finally, we obtain the first order necessary conditions from~\eqref{eq:inner-game} for all $n\in\mathcal{D}$:
\begin{equation*}
	 K_h\Big(l_n(h)+L(h)+\sum_{k\neq n}\delta_k(h) \Big)  
 	 +2\Big(K_h+\betamm-\lambda_h\Big) \delta_n(h) =0,
\label{eq:set-linear-centralized}
\end{equation*}
and form a system of equations together with~\eqref{eq:wholeworst-tilde}.
By separating variables and solving, we get
\begin{equation} 
\label{eq:solgame_norm2}
	\delta_n(h)=\tfrac{\sqrt{\alpha(h)}\left(l_n(h)+L(h)+\sum_{k\neq n}\delta_k(h)\right)}{\sqrt{\sum_n(l_n(h)+L(h)+\sum_{k\neq n}\delta_k(h))^2}}, 
	\forall \:n\in \mathcal{D} 
\end{equation}
\begin{equation}
	\lambda_h = K_h +\betamm+ K_h\tfrac{\sqrt{\sum_n(l_n(h)+L(h)+\sum_{k\neq n}\delta_k(h))^2}}{2\sqrt{\alpha(h)}}
\label{eq:lambda-opt}
\end{equation}
where we have chosen positive sign to satisfy $\lambda_h\geq K_h+\betamm$. 

Since the solution to the fixed point equations is derived from conditions \eqref{eq:kkt_cond}--\eqref{eq:posdef_cond}, they are, therefore, global maximum of the individual objective functions. 
As no player can further maximize their objectives given the other players strategies, they form an NE. 
If the game is strongly monotone (Th. \ref{th:monotone-gameBt}c is satisfied), then asynchronous fixed-point iterations over these equations will converge to the global maximum. 
If the game is not strongly monotone, then the set of optimal equations given by~\eqref{eq:solgame_norm2}--\eqref{eq:lambda-opt} have to be solved in an alternative fashion. 
We propose a fixed-point iterative mapping that results from calculating equation \eqref{eq:solgame_norm2} repeatedly, plus a projection operation. 
The mapping in vectorial form becomes:
\begin{IEEEeqnarray}{c}
	T_h(\bm{\delta}(h),\bm{a}_h)=\Pi_{X_h}\bigg( \sqrt{\alpha(h)}\frac{\bm{a}_h+A\bm{\delta}(h)}{\Vert\bm{a}_h+A\bm{\delta}(h)\Vert } \bigg)
\label{eq:minmax-fixedpoint-map}
\end{IEEEeqnarray}
where $\bm{a}_h = L(h)+\bm{l}(h)$, $A = \bm{1}\bm{1}^T-\bm{I}$ with size $|\mathcal{D}|\times|\mathcal{D}|$ and $\Pi_{X_h}(\cdot)$ represents the euclidean projection onto region~$X_h$. 
Such region $X_h$ is defined as
\begin{equation}
	X_h\triangleq \Big\{ \bm{\delta}(h)\in \mathbb{R}_+^{|\mathcal{D}|}\ \big|
	\   \bm{1}^T \bm{\delta}(h)+ \tfrac{1}{|D|-1}\bm{1}^T\bm{a}_h -\sqrt{\alpha(h)|D|}\geq 0 \Big\}
\label{eq:set-X}
\end{equation}
where $\mathbb{R}_+^{|\mathcal{D}|}$ refers to the nonnegative quadrant, so that $X_h$ corresponds to the upper halfspace region limited by the hyperplane in the definition.

Next we give the following properties of the mapping $T_h$:
\begin{theorem}
Given the mapping $T_h(\bm{\delta}(h),\bm{a}_h)$ defined in \eqref{eq:minmax-fixedpoint-map} where $\bm{a}_h$ is fixed, then it follows: 
\begin{enumerate}[label=\alph*)]
	\item $T_h:X_h\rightarrow X_h$ is a self-map for every $h\in H$.
	\item $T_h$ is a contraction mapping.
	\item The update rule $\bm{\delta}(h)^{k+1} = T_h(\bm{\delta}(h)^{k},\bm{a}_h)$
		converges to a unique point $\bm{\delta}(h)^\ast\in X_h$ that solves the fixed point equation: 
		\begin{equation*}
			\bm{\delta}^{\ast}(h) = T_h(\bm{\delta}^{\ast}(h),\bm{a}_h).
		\end{equation*}
	\item The point $\bm{\delta}^{\ast}(h)$ is an NE of game $\gameBt$, with $\bm{\lambda}$ satisfying~\eqref{eq:wholeworst-tilde} and fixed 
		strategies~$\bm{l}(h)$.
\end{enumerate}
\label{th:minmax-fixpoint-mapping}
\end{theorem}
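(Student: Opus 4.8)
The plan is to handle the four claims in the order stated, reducing (a)--(c) to standard facts about nonexpansive and contractive maps on the closed convex set $X_h$, and (d) to the global-optimality characterization already furnished by Theorem~\ref{th:globalmax}. Throughout I would write $\bm{u}(\bm{\delta}) = \bm{a}_h + A\bm{\delta}$ for the argument of the normalization, so that $T_h(\bm{\delta},\bm{a}_h) = \Pi_{X_h}\big(\sqrt{\alpha(h)}\,\bm{u}/\|\bm{u}\|\big)$, and I would repeatedly use that $\bm{1}^T A = (|\mathcal{D}|-1)\bm{1}^T$, hence $\bm{1}^T\bm{u} = \bm{1}^T\bm{a}_h + (|\mathcal{D}|-1)\bm{1}^T\bm{\delta}$.

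For (a), observe that $X_h$ in \eqref{eq:set-X} is the intersection of the nonnegative orthant with a halfspace, hence nonempty, closed and convex; since the Euclidean projection onto such a set always returns a point of the set, $T_h(\bm{\delta},\bm{a}_h)\in X_h$ for every admissible input. The only substantive point is that the normalization is well defined on $X_h$, i.e.\ $\bm{u}(\bm{\delta})\neq\bm{0}$. But the defining inequality of $X_h$ is exactly $\bm{1}^T\bm{u} \ge (|\mathcal{D}|-1)\sqrt{\alpha(h)|\mathcal{D}|}>0$, so $\bm{u}\neq\bm{0}$ and $T_h$ is a self-map.

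For (b) I would invoke that $\Pi_{X_h}$ is nonexpansive, so it suffices to show the inner map $g(\bm{\delta})=\sqrt{\alpha(h)}\,\bm{u}/\|\bm{u}\|$ is a contraction. Its Jacobian equals $\frac{\sqrt{\alpha(h)}}{\|\bm{u}\|}\big(I-\hat{\bm{u}}\hat{\bm{u}}^T\big)A$ with $\hat{\bm{u}}=\bm{u}/\|\bm{u}\|$. Two ingredients control its operator norm. First, the halfspace constraint together with Cauchy--Schwarz ($\bm{1}^T\bm{u}\le\sqrt{|\mathcal{D}|}\,\|\bm{u}\|$) gives the uniform lower bound $\|\bm{u}\|\ge(|\mathcal{D}|-1)\sqrt{\alpha(h)}$ on $X_h$ --- this is precisely the purpose of the halfspace in \eqref{eq:set-X}, namely to keep the iterates in the region where the normalization step is contractive. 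Second, $A=\bm{1}\bm{1}^T-I$ has its dominant singular direction along $\bm{1}$ (eigenvalue $|\mathcal{D}|-1$) and acts as $-I$ on $\bm{1}^\perp$; because $\bm{1}^T\bm{u}>0$ the projector $I-\hat{\bm{u}}\hat{\bm{u}}^T$ strictly removes part of this dominant direction. Splitting $(I-\hat{\bm{u}}\hat{\bm{u}}^T)A$ into a rank-one term and $-(I-\hat{\bm{u}}\hat{\bm{u}}^T)$ yields $\|(I-\hat{\bm{u}}\hat{\bm{u}}^T)A\|\le 1+|\mathcal{D}|\sin\theta$ with $\theta=\angle(\bm{u},\bm{1})$, while the sharpened bound $\|\bm{u}\|\ge(|\mathcal{D}|-1)\sqrt{\alpha(h)}/\cos\theta$ makes the modulus at most $\cos\theta\,(1+|\mathcal{D}|\sin\theta)/(|\mathcal{D}|-1)$, which I would maximize over $\theta$ to exhibit a constant strictly below one. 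I expect this strictness to be the main obstacle: the naive product of Lipschitz constants ($\|A\|=|\mathcal{D}|-1$ times the normalization bound $1/\|\bm{u}\|$) gives exactly $1$, and the contraction is recovered only by exploiting the misalignment forced by $\bm{1}^T\bm{u}>0$; the borderline case $|\mathcal{D}|=2$ (where $\|A\|=1$) would additionally need the nonnegativity in $X_h$ to rule out $\bm{u}\parallel\bm{1}$ at the minimal norm.

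Parts (c) and (d) are then short. Part (c) is the Banach fixed-point theorem: $X_h$ is a closed subset of $\mathbb{R}^{|\mathcal{D}|}$, hence complete, and (a)--(b) make $T_h$ a contractive self-map, so the iteration $\bm{\delta}(h)^{k+1}=T_h(\bm{\delta}(h)^{k},\bm{a}_h)$ converges geometrically to the unique fixed point $\bm{\delta}^{\ast}(h)$. For (d), I would argue that the projection is inactive at $\bm{\delta}^{\ast}(h)$, i.e.\ $g(\bm{\delta}^{\ast}(h))\in X_h$, so that $\bm{\delta}^{\ast}(h)=\sqrt{\alpha(h)}\,\bm{u}^{\ast}/\|\bm{u}^{\ast}\|$ is exactly the fixed-point form \eqref{eq:solgame_norm2}. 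Choosing $\lambda_h$ as in \eqref{eq:lambda-opt} guarantees $\lambda_h\ge K_h+\betamm$, so $(\bm{\delta}^{\ast}(h),\lambda_h)$ satisfies the stationarity \eqref{eq:kkt_cond} and positive-semidefiniteness \eqref{eq:posdef_cond} conditions of Theorem~\ref{th:globalmax} for each player's quadratic maximization; hence every $\delta_n^{\ast}(h)$ is a global best response and $\bm{\delta}^{\ast}(h)$ is an NE of $\gameBt$ with $\bm{\lambda}$ feasible for the NCP \eqref{eq:wholeworst-tilde}.
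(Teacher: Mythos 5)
Your overall architecture matches the paper's: (a) via the projection onto the closed convex set $X_h$, (b) by bounding a contraction modulus using the halfspace in \eqref{eq:set-X}, (c) via Banach, and (d) by showing the fixed point satisfies \eqref{eq:solgame_norm2}--\eqref{eq:lambda-opt} and invoking Theorem~\ref{th:globalmax}. You also correctly isolate the crux of (b): the naive product of Lipschitz constants, $\|A\|=|\mathcal{D}|-1$ times $\sqrt{\alpha(h)}/\|\bm{u}\|$ with $\|\bm{u}\|\ge(|\mathcal{D}|-1)\sqrt{\alpha(h)}$ on $X_h$, equals exactly $1$. But your proposed repair does not close this gap: maximizing $\cos\theta\,(1+|\mathcal{D}|\sin\theta)/(|\mathcal{D}|-1)$ over $\theta$ yields a value strictly greater than $1$ for $|\mathcal{D}|=2$ and $|\mathcal{D}|=3$ (roughly $1.76$ and $1.12$), and the additional restriction $\bm{u}>0$ (which gives $\cos\theta\ge 1/\sqrt{|\mathcal{D}|}$) does not exclude the maximizing angle in those cases; so the contraction constant is not established uniformly in $|\mathcal{D}|$. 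The paper argues differently: Lemma~\ref{th:lemma-normed-vectors} reduces the Lipschitz constant to $\sqrt{\alpha(h)}\,\|A\|/\gamma$ with $\gamma=\min_{\bm{x}\in X_h}\|\bm{a}_h+A\bm{x}\|$, and a geometric argument then places $X_h$ outside the level ellipsoid $\|A\bm{x}+\bm{a}_h\|=\sqrt{\alpha(h)}(|\mathcal{D}|-1)$, whose tangent hyperplane in the direction $\bm{1}/\|\bm{1}\|$ is precisely the hyperplane bounding $X_h$. If you insist on the Jacobian route you would need a sharper estimate of $\|(I-\hat{\bm{u}}\hat{\bm{u}}^T)A\|$ than the triangle-inequality bound $1+|\mathcal{D}|\sin\theta$ (e.g.\ the exact top singular value of the restriction to $\mathrm{span}\{\bm{1},\hat{\bm{u}}\}$) to handle small $|\mathcal{D}|$.

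The second gap is in (d): you assert that the projection is inactive at $\bm{\delta}^{\ast}(h)$, but that is precisely the nontrivial claim, and nothing in (a)--(c) supplies it --- Banach only delivers a fixed point of the \emph{projected} map, which a priori could sit on the boundary of $X_h$ with $g(\bm{\delta}^{\ast})\notin X_h$. The paper closes this with a separate feasibility argument: it shows the auxiliary problem $\mathbf{P}$ (find $\bm{x}\in X_h$ satisfying the unprojected equation $\bm{x}=\sqrt{\alpha(h)}\,(\bm{a}_h+A\bm{x})/\|\bm{a}_h+A\bm{x}\|$) is feasible, via a change of variables reducing it to the nonempty intersection of a halfspace through the origin with a positive cone; uniqueness from (c) then forces $\bm{\delta}^{\ast}(h)$ to coincide with that unprojected solution, after which your KKT argument via \eqref{eq:lambda-opt} and Theorem~\ref{th:globalmax} applies verbatim. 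Some existence argument of this kind is indispensable; without it, part (d) of your proposal is unsupported.
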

\begin{proof}
See Appendix \ref{th:minmax-fixpoint-proof}
\end{proof}

Because of Theorem~\ref{th:minmax-fixpoint-mapping} and the equivalence principle of Lemma~\ref{lemma:game-equivalence}, the fixed-point solution of the mapping also maximizes the objectives of game~$\gameB$.
Thus, the mapping provides an iterative method to find a solution of the game, regardless of its monotonicity properties.

Region $X_h$ in~\eqref{eq:set-X} becomes a technical adjustment that allows the operator to be a contraction self-mapping.
It has an involved interpretation, but in simple terms it limits the search space of the fixed-point to the upper region of the hyperplane.
In particular, in such upper halfspace the mapping is always a contraction, while in the lower halfspace such property is not always satisfied.
Therefore, a fixed point in $X_h$ always exists, and a recursive algorithm will converge to such point.

Finally, we can explicitly give an expression of the projection operation into the halfspace $X_h$, which is given by
\begin{IEEEeqnarray*}{c}
\Pi_{X_h} (x) \triangleq \left\{
\begin{IEEEeqnarraybox}[][c]{l}
	x  \hfill \text{if } x\in X_h\; \\
	x + \Big(\sqrt{\alpha(h)}-\big(A^{-1}\bm{a}_h+x\big)^T \tfrac{\bm{1}}{\Vert \bm{1}\Vert} \Big)
		\tfrac{\bm{1}}{\Vert \bm{1}\Vert} \quad \\
	 	\hfill \text{if } x\notin X_h,
\end{IEEEeqnarraybox}
\right.
\end{IEEEeqnarray*}
which only involves linear operations.

\subsection{Distributed Robust Algorithm}
Distributed implementations are beneficial for the smart grid because of their computation scalability.
Therefore, we propose the distributed Algorithm~\ref{alg:worst-case-game}.

The algorithm solves $\gameB$, and because of the monotonicity shown in Theorem~\ref{th-NE-existence-gameB} in variable $\bm{l}$, convergence is guaranteed. 
The proximal term included in step (S.3) is necessary if $\gameB$ is only monotone, and $\tau$ needs to be sufficiently large and satisfy that the game becomes strongly monotone, as indicated in~\cite[Cor.4.1]{Scutari2012}.
Any optimization toolbox would suffice to solve the strongly convex problem indicated in (S.3), which has a unique solution.
In step (S.4.), we solve the inner max game $\gameBt$ among all players with fixed components $\bm{l}^{i+1}_n$. In order to do so, we iteratively calculate $T(\bm{\delta}(h)^{k},\bm{a}_h)$ until convergence, which is guaranteed by Theorem~\ref{th:minmax-fixpoint-mapping}.
The main outer loop is also repeated until convergence.

Regarding the information exchange costs, users solve their local optimization problems, and then communicate their intended energy load profiles to the central unit. The central unit aggregates all energy loads and computes the individual worst-case error terms and the robust energy price defined in~\eqref{eq:robust-price}. Then, the robust energy price and the individual worst-case error estimates are communicated back to the users (privacy of individual values is preserved). Finally, the users re-optimize their new energy load profiles.

It seems sensible that the inner loop described on step~4 of Algorithm~\ref{alg:worst-case-game} is performed by the central unit. It involves simple operations described by the mapping $T_h(\bm{\delta}(h),\bm{a}_h)$, which scales linearly with the number of users $|\mathcal{D}|$, and requires no communication in order to find a solution. An alternative computation of the error terms in a distributed fashion is also possible, but it would increase the required communication.
\begin{algorithm}
\caption{Distributed robust algorithm for game $\gameB$}
\begin{algorithmic}
\item (S.1) Given $K_h$ for all $h\in H$. Set iteration index $i=0$. \\
		\hspace{2em}Set $\tau > 0$. 
		Initialize $\bm{\delta}^0(h)=\tfrac{\sqrt{\alpha(h)}}{\sqrt{|\mathcal{D}|}}\bm{1}$.
\item (S.2) If a termination criterion is satisfied, STOP.
\item (S.3) Solve for every $n\in\mathcal{N}$
\begin{equation*}
	\bm{l}_n^{i+1} = \arg\min_{\bm{l}_n\in\Omega_{\bm{l}_n}} f(\bm{l}_n,\bm{l}_{-n}^i,\bm{\delta}^i) 
	+\frac{\tau}{2}(\bm{l}_n-\bar{\bm{l}}_n^i)^2
\end{equation*}
\item (S.4) Iterate from $k=0$ until convergence
\begin{equation}
	\hat{\bm{\delta}}^{k+1}(h)=T(\hat{\bm{\delta}}^{k}(h),\bm{a}^{i+1}_h),\qquad \forall h\in H
\end{equation}
\hspace{2em} where $\bm{a}_h^{i+1}=\sum_{n\in \mathcal{D}}l_n^{i+1}(h)+\bm{l}^{i+1}(h)$.\\
\hspace{2em} Set the fixed-point solution: $\bm{\delta}^{i+1} \leftarrow \hat{\bm{\delta}}^{k+1}$.
\item (S.5) If an NE has been reached, update centroids $\bar{\bm{l}}_n^i = \bm{l}_n^{i}$.
\item (S.6) Set $i\leftarrow i+1$. Go to (S.2).
\end{algorithmic}
\label{alg:worst-case-game}
\end{algorithm}

\vspace{-1em}

\section{Simulation Results}
\label{sec:simulations}
In this section, we present simulation results from our robust analysis.
First, in Subsection~\ref{sub:simu-realtime} we present a real-time cost comparison with a non-robust model to illustrate that the robust model can effectively reduce the monetary costs.
Secondly, in Subsection~\ref{sub:simu-naive-comparison}, we compare our algorithm with a model formed by naive users who do not take into account the worst-case error terms.
We show that the monetary loses in this case are not negligible, and emphasize the importance of optimizing the energy loads taking into account the error terms.
In addition, we include convergence results.

In the following simulations, we used Algorithm~\ref{alg:worst-case-game} to determine $\bm{\delta}_n$ and $\bm{l}_n$ applied to a scenario similar to the one proposed in~\cite{Atzeni2012}.
In this kind of scenario, users decide upon local energy generation and storage strategies.
The total number of users $|\mathcal{D}|$ will vary between simulations, but the proportion of active users remains fixed in all of them, where $|\mathcal{N}|=\lfloor|\mathcal{D}|/2\rfloor$. 
From the active users set $\mathcal{N}$, the number of users that generate and store energy is $GS=|\mathcal{N}|-2\lfloor|\mathcal{N}|/3\rfloor$, the number of users who only generate is $G=\lfloor|\mathcal{N}|/3\rfloor$ and the number of users who only store energy is $S=\lfloor|\mathcal{N}|/3\rfloor$. 
The day is divided in $|H|=24$ time slots. 
The average consumption per user is of $4.5\:\unit{kWh}$ with cost coefficients $K_{\rm{day}}=1.5K_{\rm{night}}$, where we assumed there is lower price at times $h_{\rm{night}}=[1,...,|H|/3]$ and higher price at time slots $h_{\rm{day}}=[|H|/3+1,...,|H|]$.
These values are determined so that the initial average price for the day without any demand-side management is of $0.1412\:\unit{\pounds/kWh}$ (same as in~\cite{Atzeni2012}).

Other parameters involving active users with generation capabilities, $GS$ and $G$, are: $g_n^{\rm{(max)}}=0.4\:\unit{kWh/h}$, which is the maximum energy production capability per hour by the user's owned infrastructure; and $\gamma_n^{\rm{(max)}}=0.8g_n^{\rm{(\max)}}$, which is the maximum energy production capability per day. 
For active users owning storing devices, $GS$ and $S$, we define the following variables: the hourly batteries leakage rate $\alpha_n=\sqrt[24]{0.9}$, the charge efficiency $\upsilon_n^+=0.9$, the efficiency at discharge $\upsilon_n^-=1.1$, the capacity of the storage device $c_n=4\:\unit{kWh}$, the maximum charge rate per hour $s_n^{(max)}=0.125c_n$, the charge level at the beginning of the day $q_n(0)=0.25c_n$ and the difference of charge at the next day $\epsilon_n=0$.
All these variables are further explained in reference~\cite{Atzeni2012}. In essence, the parameters presented form a convex model of the problem that can be solved using convex optimization solvers.

The energy demand profiles per user have been simulated using a typical day profile similar to the one used in~\cite{Atzeni2012} plus some random Gaussian noise that adds some variance to the users.
The $\alpha(h)$ terms have been fixed to be the $10\%$ of the total energy demand per time slot $h\in H$.

\vspace{-1em}
\subsection{Real-time Cost Comparison}
\label{sub:simu-realtime}
In this subsection, we present a comparison of our robust model with the non-robust model from~\cite{Atzeni2012a}, when users deviate from their day-ahead optimized energy load profiles. 
Specifically, we present the average cost function over 100 independent simulations, where the real time error deviations are given by white Gaussian processes with zero mean and variance $\alpha(h)/|\mathcal{D}|$ for each user, and where the number of users $|\mathcal{D}|$ ranges from 100 to 2000. 
The cost function we use to evaluate the performance in real-time of Algorithm~\ref{alg:worst-case-game} is described in~\eqref{eq:robust_real_time_cost}.
Likewise, the cost function for user $n$ we compare with, denoted as `non-robust', has the following form:
\begin{IEEEeqnarray}{rCl}
	f_{n}^{\text{nrt}}\left(\bm{l},\bm{\delta}^{\ast},\bm{l}^{\text{rt}}\right)&=&\sum_{h\in H}K_{h}\hat{L}_{wc}(h)\Big( l^{\text{rt}}_{n}(h)
	+ \nu_h \big(l_n(h)-l_n^{\text{rt}}(h)\big)^+  \IEEEnonumber \\
	& & + \upsilon_h \big(l_n^{\text{rt}}(h)-l_n(h)\big)^+ \Big),
\label{eq:pen-function-non-robust}
\end{IEEEeqnarray}
which has no error terms in contrast to~\eqref{eq:robust_real_time_cost}.
Note also that we are representing the average cost of all users in the network.
Finally, values $\nu_h$ and $\upsilon_h$ are set to $\nu_h=0.2\kappa$ for $h\in[0,\ldots,8]$, $\nu_h=0.8\kappa$ for $h\in [9,\ldots,24]$ and $\upsilon_h=(\kappa-\nu_h)$, where $\kappa = \sqrt{{|\mathcal{D}|}/{\alpha(h)}}$. 
The specific simulation parameters $\nu_h$ and $\upsilon_h$ are obtained from~\cite{Atzeni2012a}, and parameter $\kappa$ is introduced to incorporate the uncertainty of the real-time consumption in the penalty terms. 
Note that the value of $\kappa$ is the inverse of the deviation of every user's real-time error distribution.

The average results of the cost models are presented in Figure~{\ref{fig:real-time}.
The term $M(\bm{\delta}_n)$ is not present in~\eqref{eq:pen-function-non-robust}, so we set $M(\bm{\delta}_n)=0$ in~\eqref{eq:robust_real_time_cost} for a fair comparison.
Commenting on these results, we observe that the cost for the users is significantly lower than with a penalty based model as~\eqref{eq:pen-function-non-robust}.
Therefore, this model effectively reduces the monetary expenses of all users in the real-time market, while at the same time provides a reliable production cost estimate to the energy supplier.
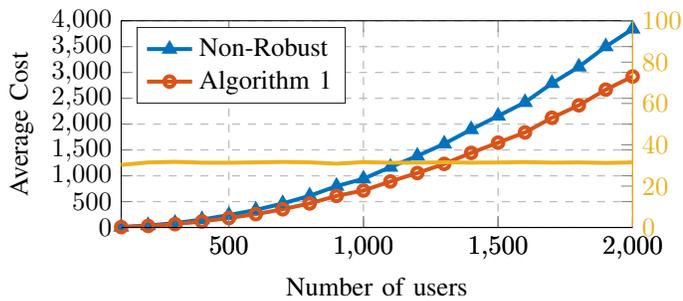
\begin{figure}
    \centering
%
\definecolor{mycolor1}{rgb}{0.00000,0.44700,0.74100}%
\definecolor{mycolor2}{rgb}{0.85000,0.32500,0.09800}%
\definecolor{mycolor3}{rgb}{0.92900,0.69400,0.12500}%
\begin{tikzpicture}

\begin{axis}[%
width=\figurewidthD,
height=\figureheightD,
at={(0.758in,0.481in)},
scale only axis,
xmin=100,
xmax=2000,
xlabel={Number of users},
xmajorgrids,
ymin=0,
ymax=4000,
ytick={   0,  500, 1000, 1500, 2000, 2500, 3000, 3500, 4000},
ylabel={Average Cost},
ymajorgrids,
axis background/.style={fill=white},
title style={font=\bfseries},
legend style={at={(0.03,0.97)},anchor=north west,legend cell align=left,align=left,draw=white!15!black}
]
\addplot [color=mycolor1,solid,line width=1.5pt,mark=triangle,]
  table[row sep=crcr]{%
100	9.40439247242477\\
200	37.9971391655293\\
300	84.4609264834764\\
400	153.021559912895\\
500	239.399939098256\\
600	339.113422802725\\
700	463.957605510809\\
800	611.979929078311\\
900	799.681968071735\\
1000	941.448447874173\\
1100	1168.52985638913\\
1200	1383.3459586267\\
1300	1618.24098953289\\
1400	1895.5600579374\\
1500	2157.35409983393\\
1600	2421.17788989356\\
1700	2789.00817083956\\
1800	3107.68286237953\\
1900	3499.82523231752\\
2000	3841.63627690608\\
};
\addlegendentry{Non-Robust};

\addplot [color=mycolor2,solid,line width=1.5pt,solid,mark=o]
  table[row sep=crcr]{%
100	7.22000401758522\\
200	28.9053893660369\\
300	64.1064714768231\\
400	116.494023948155\\
500	182.310385592917\\
600	257.90471518864\\
700	352.272090494387\\
800	465.352795909705\\
900	611.178729491165\\
1000	715.241028400839\\
1100	889.8427248099\\
1200	1053.39730717588\\
1300	1230.37311395661\\
1400	1443.13682729418\\
1500	1641.39968765883\\
1600	1839.09377990829\\
1700	2123.2435062938\\
1800	2363.8594661281\\
1900	2667.17319624438\\
2000	2921.75598535037\\
};
\addlegendentry{Algorithm 1};

\end{axis}

\begin{axis}[%
width=\figurewidthD,
height=\figureheightD,
at={(0.758in,0.481in)},
scale only axis,
xmin=100,
xmax=2000,
every outer y axis line/.append style={mycolor3},
every y tick label/.append style={font=\color{mycolor3}},
ymin=0,
ymax=100,
ytick={  0,   20,  40,  60,  80,  100},
axis x line*=bottom,
axis y line*=right
]
\addplot [color=mycolor3,solid,line width=1.5pt,forget plot]
  table[row sep=crcr]{%
100	30.2546709048804\\
200	31.4534763201459\\
300	31.7510144260742\\
400	31.355716565338\\
500	31.3144823426651\\
600	31.4878723929825\\
700	31.7043325401339\\
800	31.5088110477491\\
900	30.8425718181502\\
1000	31.6267398668525\\
1100	31.3186952940215\\
1200	31.3223367102961\\
1300	31.5244108617579\\
1400	31.3499885864253\\
1500	31.4338071375545\\
1600	31.6505942407294\\
1700	31.3560202855804\\
1800	31.4664812739388\\
1900	31.2185214385622\\
2000	31.4838164503805\\
};
\end{axis}
\end{tikzpicture}%
	\caption{Real time averaged cost functions over 100 simulations comparing robust function~\eqref{eq:robust_real_time_cost} and~\eqref{eq:pen-function-non-robust}. The left axis shows the monetary cost of all users and the right axis shows (in \%) the mean relative gain when using the robust algorithm.}
	\label{fig:real-time}
\vspace{-1em}
\end{figure}

\vspace{-1em}
\subsection{Comparison of Algorithm~\ref{alg:worst-case-game} with naive users}
\label{sub:simu-naive-comparison}
In order to further support our results, we include the performance of our robust algorithm and compare it to a scenario where users do not consider the worst-case error terms in their energy load profiles. 
In \reffig~\ref{fig:minmax}, we plot the monetary cost of all users in different sizes of the smart grid. 
The number of users in this plot varies from 100 to 2000 users, and the total number of time slots is $|H|=24$. 
The curve we refer to as ``naive users" is the solution to the game without accounting for any error terms (i.e., $\bm{\delta}=0$), but with added worst-case calculations in the price determined by the energy supplier.
On the other hand, the robust curve represents the total energy cost when using Algorithm~\ref{alg:worst-case-game}, which does take into account the worst-case error terms. 
The average relative gain over all users when using the robust algorithm is around the 7\%, which remains almost constant with the number of users; these savings values are represented in the right hand axis of the figure for different number of users.
This result illustrates that users should adjust their energy load profiles after taking the worst-case error terms into account.
For these simulations we fixed the term $\betamm=0.001$.

In Figure~\ref{fig:conv-alg1-1000users}, we plot the convergence rate of Algorithm 1, which depicts the averaged convergence rate particularized for 1000 users.
The convergence rate is very similar to the results presented in earlier approaches (see, e.g., as \cite[Alg.1]{Atzeni2012}).
Our algorithm has the same algorithmic steps except for the inner loop involved in step 4, which we didn't observe to impact the computation time. 
In \reffig~\ref{fig:convergence}, we plot the convergence speed of the contraction mapping from step (S.4) in Algorithm~\ref{alg:worst-case-game}.
The x-axis shows the number of iterations and the y-axis shows the magnitude $\sum_{h\in H}\Vert \bm{\delta}^{k+1}(h)-\bm{\delta}^k(h)\Vert$.
The stopping criteria was set to~$10^{-8}$.
Recall that convergence is guaranteed since the computation in (S.4) is a contraction mapping.

The proposed algorithm was implemented in Matlab R2015a using an Intel i5-2500 CPU @ 3.30GHz.
The user's optimization problems were run in sequential order (although in practice they can run in parallel). 
The local convex optimization problems have unique optimal points, so any optimization solver will produce similar results as ours.
In particular, we used the \emph{fmincon} function from Matlab. 
The whole simulation took about 61.38 seconds for 1000 users.

As a conclusion, these simulations show consistent results in which there is an actual benefit of around the 7\% when using a robust algorithm vs. a naive one (i.e., that does not take into account the error terms for the optimization process), while the extra computation required is minimal.
\begin{figure}
    \centering
%
%
\definecolor{mycolor1}{rgb}{0.00000,0.44700,0.74100}%
\definecolor{mycolor2}{rgb}{0.85000,0.32500,0.09800}%
\definecolor{mycolor3}{rgb}{0.92900,0.69400,0.12500}%
\begin{tikzpicture}

\begin{axis}[%
width=\figurewidthB,
height=\figureheightB,
at={(0.771875in,0.483542in)},
scale only axis,
xlabel={Number of users},
ylabel={Total energy cost},
xmin=100,
xmax=2000,
xmajorgrids,
ymin=0,
ymax=4000,
ytick={   0,  500, 1000, 1500, 2000, 2500, 3000, 3500, 4000},
ymajorgrids,
legend style={at={(0.03,0.97)},anchor=north west,legend cell align=left,align=left,draw=white!15!black}
]
\addplot [color=mycolor1,solid,line width=1.5pt,mark=triangle]
  table[row sep=crcr]{%
100	187.182195309017\\
200	376.152024042778\\
300	566.643372861118\\
400	758.361127482576\\
500	949.761815291969\\
600	1137.24837427836\\
700	1321.65154448868\\
800	1518.27154618576\\
900	1731.87070411965\\
1000	1886.09459539086\\
1100	2097.87977163315\\
1200	2288.20174943484\\
1300	2468.47092189001\\
1400	2672.93825498226\\
1500	2853.35546173391\\
1600	3023.90894134046\\
1700	3245.1887236921\\
1800	3428.89716670192\\
1900	3624.27228676651\\
2000	3810.12689129809\\
};
\addlegendentry{Naive users};

\addplot [color=mycolor2,solid,line width=1.5pt,mark=o,mark options={solid},]
  table[row sep=crcr]{%
100	177.148505389668\\
200	355.385522039274\\
300	535.296339042203\\
400	716.4010911236\\
500	897.856139038223\\
600	1074.18174323372\\
700	1248.24734929704\\
800	1434.93720141532\\
900	1638.33494683135\\
1000	1782.04716521945\\
1100	1982.78906754723\\
1200	2162.85248646961\\
1300	2332.80828476687\\
1400	2525.21100909988\\
1500	2696.96537445331\\
1600	2856.9628235564\\
1700	3065.31829567915\\
1800	3240.78441526506\\
1900	3426.59315890292\\
2000	3598.45665783332\\
};
\addlegendentry{Algorithm 1};

\end{axis}

\begin{axis}[%
width=\figurewidthB,
height=\figureheightB,
at={(0.771875in,0.483542in)},
scale only axis,
xmin=100,
xmax=2000,
every outer y axis line/.append style={mycolor3},
every y tick label/.append style={font=\color{mycolor3}},
ymin=0,
ymax=20,
ytick={ 0,  5, 10, 15, 20},
axis x line*=bottom,
axis y line*=right
]
\addplot [color=mycolor3,solid,forget plot,line width=1pt]
  table[row sep=crcr]{%
100	6.91176583487433\\
200	7.20741085665551\\
300	7.42343874051777\\
400	6.86843099101754\\
500	7.20391069261781\\
600	7.10772039365399\\
700	7.2482537637812\\
800	7.19467203270474\\
900	6.91817798732917\\
1000	6.75326943968563\\
1100	7.06240812892998\\
1200	7.21734876060714\\
1300	7.28968625332312\\
1400	7.90759082160772\\
1500	7.09017310510534\\
1600	7.36748732372717\\
1700	7.27494845481047\\
1800	6.99992092301473\\
1900	7.04011138483411\\
2000	7.91005335384835\\
};
\end{axis}
\end{tikzpicture}%
	\caption{Energy cost of all players vs. number of users. The left axis shows the monetary cost and the right axis shows the mean relative gain (in \%) when using the robust algorithm.}
	\label{fig:minmax}
	\vspace{-1em}
\end{figure}
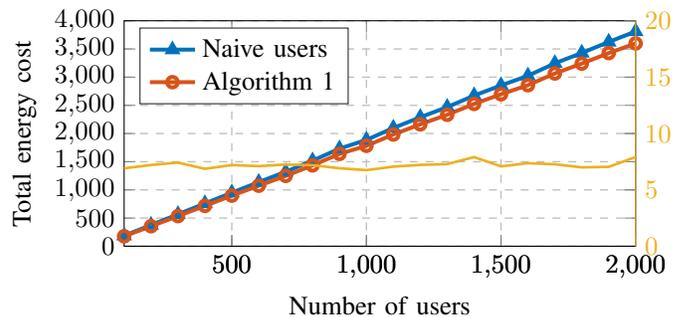
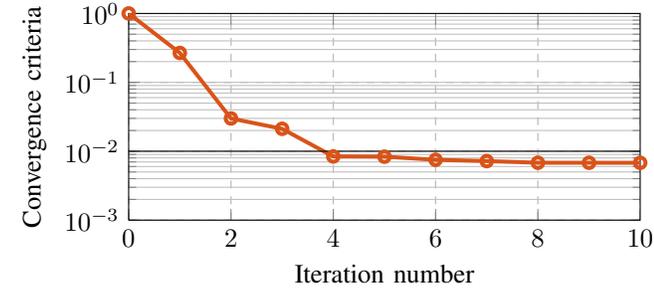
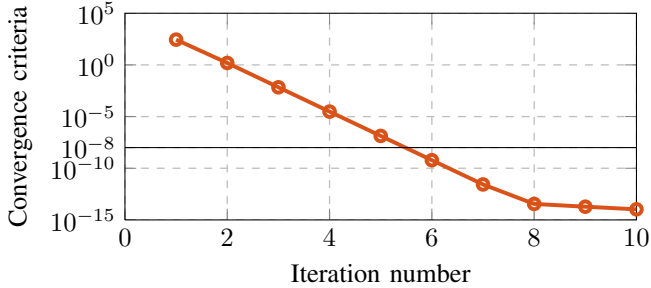
\begin{figure}
	\centering
	\begin{subfigure}[b]{1\linewidth}
    		\centering
%
%

\definecolor{mycolor1}{rgb}{0.00000,0.44700,0.74100}%
\definecolor{mycolor2}{rgb}{0.85000,0.32500,0.09800}%
\definecolor{mycolor3}{rgb}{0.92900,0.69400,0.12500}%

\begin{tikzpicture}

\begin{axis}[%
width=\figurewidthE,
height=\figureheightE,
at={(0.758333in,0.48125in)},
scale only axis,
xmin=0,
xmax=10,
xlabel={Iteration number},
xmajorgrids,
ymode=log,
ymin=0.001,
ymax=1,
yminorticks=true,
ylabel={Convergence criteria},
ymajorgrids,
yminorgrids
]
\addplot [color=mycolor2,solid,line width=1.5pt,mark=o,mark options={solid},forget plot]
  table[row sep=crcr]{%
0	1\\
1	0.266953229671377\\
2	0.0299283549303286\\
3	0.0211694395905286\\
4	0.00840048532018193\\
5	0.00834079340904339\\
6	0.00751115947530836\\
7	0.00719608309705565\\
8	0.00681425098891992\\
9	0.00680643855927499\\
10	0.00679868601144397\\
};
\addplot [color=black,solid,forget plot]
  table[row sep=crcr]{%
0	0.01\\
1	0.01\\
2	0.01\\
3	0.01\\
4	0.01\\
5	0.01\\
6	0.01\\
7	0.01\\
8	0.01\\
9	0.01\\
10	0.01\\
};
\end{axis}
\end{tikzpicture}%
		\caption{Convergence rate of Algorithm 1 with convergence criteria 
		$\Vert\bm{l}^i-\bm{l}^{i-1}\Vert_2/\Vert \bm{l}^i\Vert\leq 10^{-2}$.}
		\label{fig:conv-alg1-1000users}
	\end{subfigure}
	\begin{subfigure}[b]{1\linewidth}
	 	\centering
%
%

\definecolor{mycolor1}{rgb}{0.00000,0.44700,0.74100}%
\definecolor{mycolor2}{rgb}{0.85000,0.32500,0.09800}%
\definecolor{mycolor3}{rgb}{0.92900,0.69400,0.12500}%

\begin{tikzpicture}

\begin{axis}[%
width=\figurewidthC,
height=\figureheightC,
at={(0.924896in,0.438194in)},
scale only axis,
xmin=0,
xmax=10,
xlabel={Iteration number},
xmajorgrids,
ymode=log,
ymin=1e-15,
ymax=100000,
ytick={ 1e-15,  1e-10,  1e-08,  1e-05,      1, 100000},
yminorticks=true,
ylabel={Convergence criteria},
ymajorgrids,
yminorgrids
]
\addplot [color=mycolor2,solid,line width=1.5pt,mark=o,mark options={solid},forget plot]
  table[row sep=crcr]{%
1	278.048419199578\\
2	1.54541674581852\\
3	0.00682002615355241\\
4	3.02977218215456e-05\\
5	1.35002122911287e-07\\
6	6.03359917583646e-10\\
7	2.70632127818971e-12\\
8	3.52634588196565e-14\\
9	1.90680804479371e-14\\
10	1.06303854607859e-14\\
};
\addplot [color=black,solid,forget plot]
  table[row sep=crcr]{%
0	1e-08\\
1	1e-08\\
2	1e-08\\
3	1e-08\\
4	1e-08\\
5	1e-08\\
6	1e-08\\
7	1e-08\\
8	1e-08\\
9	1e-08\\
10	1e-08\\
};
\end{axis}
\end{tikzpicture}%
		\caption{Convergence rate of $\bm{\delta}^k(h)$ from step~4 in Algorithm~\ref{alg:worst-case-game} with convergence criteria $\sum_{h\in H}\Vert \bm{\delta}^{k+1}(h)-\bm{\delta}^k(h)\Vert\leq 10^{-8}$.}
		\label{fig:convergence}
	\end{subfigure}
	\caption{Convergence rates for 1000 users.}
\label{fig:fig}
\end{figure}

\vspace{-0.5em}
\section{Conclusion}
\label{sec:conclusion}
We presented a demand-side management model that takes into account deviations in the estimated consumption profiles, and analyzed the worst-case effects of these terms.
We considered a scenario where the error terms are calculated by the energy users (yielding a min-max game) which includes non-concave maximizing problems for all users.
This method provides estimates of the worst-case energy deviations and aggregate loads of the network, and further considers robust pricing as an effect of both global energy demand, as well as uncertain real-time demand costs of the supply authority.
The problems can be solved globally due to the quadratic structure they present in the objective and constraints.
With a novel analysis we are able to prove existence of NE, as well as to propose a novel convergent distributed algorithm to find such solutions.
The proposed algorithm has very little computational complexity, and provides significant monetary savings (around the 7\% in our simulations).
We further illustrated the benefit of our robust model giving flexibility to the users within certain margins while providing a robust production price to the energy supplier.

\appendices
\section{}
\label{ap:proof-th1}
\begin{proof}[Proof of Theorem \ref{th:theorem-gameA}]
We first need to establish that function $f_n(\bm{l}_n,\bm{l}_{-n},\bm{\delta})$ is convex provided that $\bm{l}_n$ is a convex function.
Indeed, this is satisfied because of the scalar composition rule~\cite[Sec.3.2.4]{Boyd2004}, where $f_n(\bm{l}_n,\bm{l}_{-n},\bm{\delta})$ is monotone if $L>0$, as required by~\eqref{eq:total-load}.
Then, because $\Omega_{\bm{l}_n}$ is convex by assumption, the individual user problem is also convex.

The rest of the results we establish are similar to the ones considered in \cite[Th.1 \& Th.2]{Atzeni2012}. 
These are based on the equivalence of game $\gameA$ to a variational inequality, as explained in \cite{Scutari2012}.
To show \emph{a)} is satisfied, it is sufficient to verify that the regions $\Omega_{\bm{l}_n}$ are compact and convex, which is satisfied by definition of $\gameA$, and that the objective functions of all players are differentiable and convex in variables $\bm{l}_n$, $\forall n\in \mathcal{D}$.
Indeed, the convexity is guaranteed since the objective functions are quadratic with positive terms in the squared elements so that the Hessian results into a diagonal matrix of the form $\diag(\{K_h\}_h)$ that is positive definite.
Hence, the claim is satisfied applying \cite[Th.4.1]{Scutari2012}.

The justification of \emph{b)} is readily available by observing that the objective is convex in $l_n(h)$ and that other players actions only affect the individual objective through the aggregate $L(h)$. 
Therefore, the maximum value is unique for each player given any strategy profile $\bm{l}^\ast_{-n}$ since the maximum does not depend on individual decisions. 
This shows that even if there are multiple NEs that reach the maximum of $f_n(\bm{l}_n,\bm{l}_{-n})$, the objective value of the user remains constant.

The game is monotone if the Jacobian $JF_{\gameA}(\bm{l},\bm{\delta})$ is positive semidefinite , where $JF_{\gameA}(\bm{l},\bm{\delta}) = \big(\nabla_{\bm{l}_m} F_n(\bm{l},\bm{\delta})\big)_{n,m\in\mathcal{D}}$,  $F_n = \big(\nabla_{\bm{l}_n} f_n(\bm{l}_n,\bm{l}_{-n},\bm{\delta})\big)_{n \in \mathcal{D}}$ and where $f_n(\bm{l}_n,\bm{l}_{-n},\bm{\delta})$ is described in \eqref{eq:indiv_cost}. The Hessian $\nabla^2_{\bm{l}_n,\bm{l}_n}f_n(\bm{l}_n,\bm{l}_{-n},\bm{\delta})=\diag(\{2K_h\}_{h\in H})$ is positive definite and the other terms $\nabla^2_{\bm{l}_m,\bm{l}_n}f_n(\bm{l}_n,\bm{l}_{-n},\bm{\delta})=\diag(\{K_h\}_{h\in H})$ for $m\neq n$ are also positive definite and, therefore, so is $JF_{\gameA}(\bm{l},\bm{\delta})$. 
The claim is satisfied applying \cite[Eq.(4.8)]{Scutari2012}.
\end{proof}

\section{}
\label{proof-theorem-monotoneBt}
\begin{proof}[Proof of Lemma \ref{lemma:lower-bound}]
We use the {\em extreme value theorem} from {\em calculus} (also known as the Bolzano-Weierstrass theorem), which states that a real-valued function on a nonempty compact space is bounded above, and attains its supremum. Then, Theorem~\ref{th:globalmax} states that if a maximum exists, there also exists a $\bar{\lambda}$ that satisfies equation \eqref{eq:posdef_cond} and, therefore, we necessarily have that there exists some $\lambda_h\geq K_h+\betamm$ that solves the problem.
\end{proof}

\begin{proof}[Proof of Theorem \ref{th:monotone-gameBt}]
To proof part \emph{a)} of the theorem (the existence of NE in $\gameBt$) we use a result from \cite[Th.1]{Nishimura1981}. 
We need to show that assumptions A1-A3 and A5 are satisfied from the mentioned reference (A4 is not required). 
Indeed, $\gameBt$ has a number of finite players, the regions for all players are compact and convex, and all objective functions are continuous on these regions (which satisfy A1-A3). 
Condition A5 is satisfied if we show that $p_n(\bm{l},\bm{\delta}_n,\bm{\delta}_{-n},\bm{\lambda})$ is concave in $\bm{\delta}_n$, which is true if its hessian is negative semidefinite. 
Indeed, $\nabla^2_{\bm{\delta}_n,\bm{\delta}_{n}} p_n = -2\text{diag}(\{\lambda_h - K_h -\betamm\})$ is diagonal, and is negative semidefinite if $\lambda_h \geq K_h + \betamm$. This also justifies that the {\em strong max-min property} of $\gameBt$ holds and the min-max can be transformed into a max-min without altering the result. This concludes the proof of part a) of the theorem.

To prove part \emph{b)} we need Lemma \ref{lemma:game-equivalence}, which guarantees the equivalence of $\gameB$ and $\gameBt$ when~\eqref{eq:wholeworst-tilde} is satisfied. Since by Lemma \ref{lemma:lower-bound} we have $\lambda_h\geq\betamm+K_h$ satisfied for all players, we conclude that a variational solution (a solution in which all $\lambda_h$ are shared among all users) exists. Note that the game $\gameB$ cannot satisfy condition A5 from previous paragraph directly and, therefore, the reformulation as $\gameBt$ is convenient to conclude the existence of an NE.

In order to prove part \emph{c)}, it suffices to show that the Jacobian $JF_{\gameBt}(\bm{\delta}(h))$ of the variational inequality associated with the game is negative definite~\cite[(4.8)]{Scutari2012}.
This is defined as $JF_{\gameBt}(\bm{l},\bm{\delta}) = \big(\nabla_m F_n(\bm{l},\bm{\delta})\big)_{n,m\in\mathcal{D}}$, where  $F_n = \big(\nabla_{\bm{\delta}_n} p_n(\bm{l},\bm{\delta}_n,\bm{\delta}_{-n},\bm{\lambda})\big)_{n \in \mathcal{D}}$ and $p_n(\bm{l},\bm{\delta}_n,\bm{\delta}_{-n},\bm{\lambda})$ is described in~\eqref{eq:minmax-game-pn}.
The Hessian for each $h\in H$ is diagonal on all players, and the Jacobian on each $h\in H$ becomes
\begin{equation}
	\nabla^2_{\bm{\delta}(h),\bm{\delta}(h)}p_n(\bm{l},\bm{\delta},\bm{\lambda})= (K_h+2\betamm-2\lambda_h) \mathbf{I} + K_h \mathbf{1}\mathbf{1}^T
\end{equation}
where $\mathbf{I}$ is the identity matrix of size $|\mathcal{D}|\times|\mathcal{D}|$, and $\mathbf{1}$ is column vector of ones of length $|\mathcal{D}|$. 
The Hessian matrix becomes diagonally dominant with negative diagonal entries for $\lambda_h > \frac{1}{2}K_h(|\mathcal{D}+1|)+\betamm$ and, thereby, negative definite~\cite[Cor.7.2.3]{Horn2012}.
If such condition is satisfied, then the game is strongly monotone and it has a unique NE that can be reached by a suitable algorithm~\cite{Scutari2012}.
\end{proof}

\vspace{-1em}
\section{}
\label{th:minmax-fixpoint-proof}
For notation simplicity, we refer in this section $T_h(\bm{\delta}(h),\bm{a}_h)$ defined in~\eqref{eq:minmax-fixedpoint-map} simply as $T(\x)=\Pi_X\big(\sqrt{\alpha}\tfrac{A\x+\bm{a}}{\Vert A\x+\bm{a}\Vert}\big)$, where $\bm{a}=\bm{a}_h$ is of size $|\mathcal{D}|\times1$, $A=\bm{1}\bm{1}^{T}-\mathbf{I}$ of size $|\mathcal{D}|\times |\mathcal{D}|$, $x=\bm{\delta}(h)$ and where we drop all indexes on~$h$.

Before proving Theorem~\ref{th:minmax-fixpoint-mapping} we first introduce the following general result:
\begin{lemma}
\label{th:lemma-normed-vectors}
Given any two vectors $x_1,x_2\in \mathbb{R}^{|\mathcal{D}|}$, the following holds:
\begin{IEEEeqnarray*}{c}
	\left\Vert \frac{x_1}{\Vert x_1 \Vert} -\frac{x_2}{\Vert x_2 \Vert}\right\Vert
	\leq  \left\Vert \frac{x_1}{\gamma_s} 
	-\frac{x_2}{\gamma_s}\right\Vert  
	 \leq  \left\Vert \frac{x_1}{\gamma} -\frac{x_2}{\gamma}\right\Vert
\end{IEEEeqnarray*}
where $\gamma_s\triangleq\min\{\Vert  x_1 \Vert,\Vert  x_2 \Vert\}$ and $\gamma \triangleq \min_{x\in X}\Vert x\Vert$, assuming in the last inequality that $\gamma_s \neq 0\neq \gamma$.
\end{lemma}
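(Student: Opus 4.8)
The plan is to treat the two inequalities separately, since the right-hand one is merely a comparison of scalar factors while the left-hand one carries the genuine content of the lemma.

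For the second inequality I would first note that both the middle and right expressions collapse to a scalar multiple of $\Vert x_1 - x_2\Vert$: indeed $\Vert x_1/\gamma_s - x_2/\gamma_s\Vert = \Vert x_1 - x_2\Vert/\gamma_s$ and $\Vert x_1/\gamma - x_2/\gamma\Vert = \Vert x_1 - x_2\Vert/\gamma$. Hence the inequality is equivalent to $\gamma \le \gamma_s$. Since $\gamma = \min_{x\in X}\Vert x\Vert$ is the minimum norm over all of $X$, and in the intended application both $x_1$ and $x_2$ lie in $X$, we have $\Vert x_1\Vert \ge \gamma$ and $\Vert x_2\Vert \ge \gamma$, so $\gamma_s = \min\{\Vert x_1\Vert,\Vert x_2\Vert\} \ge \gamma$, which closes this step.

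For the first inequality, a naive estimate via the triangle inequality (writing $x_1/\Vert x_1\Vert - x_2/\Vert x_2\Vert$ as $(x_1-x_2)/\Vert x_1\Vert$ plus a correction term that is bounded using the reverse triangle inequality) only yields the factor $2\Vert x_1 - x_2\Vert/\gamma_s$, which is too weak by a factor of two. Recovering the sharp constant requires squaring. I would assume without loss of generality $\Vert x_1\Vert \le \Vert x_2\Vert$, so that $\gamma_s = \Vert x_1\Vert$, and abbreviate $r_1 = \Vert x_1\Vert$, $r_2 = \Vert x_2\Vert$, $c = \langle x_1,x_2\rangle$. Expanding both squared norms gives $\Vert x_1/r_1 - x_2/r_2\Vert^2 = 2 - 2c/(r_1 r_2)$ and $\Vert x_1 - x_2\Vert^2/r_1^2 = (r_1^2 - 2c + r_2^2)/r_1^2$. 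Clearing the positive denominator $r_1^2$ and using the ordering $r_1 \le r_2$ to handle the sign when dividing through by $r_1 - r_2$, the target inequality reduces cleanly to $r_1 r_2 + r_2^2 \ge 2c$ (with equality in the degenerate case $r_1 = r_2$).

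The last inequality is where the structure is used: by the Cauchy-Schwarz inequality $c \le r_1 r_2$, hence $2c \le 2 r_1 r_2$, while $r_1 \le r_2$ gives $r_1 r_2 \le r_2^2$, so that $r_1 r_2 + r_2^2 \ge 2 r_1 r_2 \ge 2c$. This completes the reduction and hence the first inequality. The main obstacle is precisely recognizing that the elementary triangle-inequality bound loses a factor of two, and that one must instead square both sides and combine Cauchy-Schwarz with the norm ordering to obtain the exact constant $1/\gamma_s$; once this observation is made, the remaining algebra is routine.
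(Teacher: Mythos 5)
Your proof is correct and rests on the same core argument as the paper's: the paper establishes the first inequality by decomposing $\frac{x_1-x_2}{\gamma_s}=a+b$ with $a$ the difference of the unit vectors, and showing the cross term $a^Tb=\bigl(\Vert x_1\Vert/\Vert x_2\Vert-1\bigr)(1-\cos\theta)\geq 0$, which is exactly your ``square, expand, and reduce to $r_1r_2+r_2^2\geq 2c$ via Cauchy--Schwarz plus the norm ordering'' step in different packaging. The second inequality is handled identically in both (and you correctly flag that it needs $x_1,x_2$ to lie in the set over which $\gamma$ is minimized, a hypothesis the lemma statement leaves implicit).
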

\begin{proof}
We start with the first inequality and assume without loss of generality that $\Vert x_1\Vert \geq \Vert x_2\Vert$. 
Introduce the shorthands $a=\frac{x_1}{\Vert x_1 \Vert}-\frac{x_2}{\Vert x_2 \Vert}$,  
$b=\frac{x_1}{\Vert x_2 \Vert}-\frac{x_1}{\Vert x_1 \Vert}$ and, therefore, $a+b=\frac{x_1}{\Vert x_2 \Vert}-\frac{x_2}{\Vert x_2 \Vert}$.
By the definition of scalar product, $\Vert a+b \Vert^2 = \Vert a \Vert^2+\Vert b \Vert^2+2a^Tb$.
Hence, if $a^T b\geq 0$, we necessarily have $\Vert a+b \Vert \geq \Vert a\Vert$, since all the sums involve positive scalars.
Then, the inner product results:
\begin{IEEEeqnarray}{rCl}
	a^T b &=& \left(\frac{x_1}{\Vert x_1\Vert}-\frac{x_2}{\Vert x_2\Vert}\right)^T \left(\frac{x_1}{\Vert x_2\Vert}-\frac{x_1}{\Vert x_1\Vert} \right) \\
	& = & \frac{\Vert x_1 \Vert}{\Vert x_2\Vert}-1-\frac{x_2^T x_1}{\Vert x_2\Vert^2}+\frac{x_2^T x_1}{\Vert x_1\Vert \Vert x_2\Vert}\\
	& = & {\left(\frac{\Vert x_1 \Vert}{\Vert x_2\Vert}-1\right)}{\left(1- \cos\theta\right)} \geq 0
\end{IEEEeqnarray}
where we have used $\cos\theta = \frac{x_2^Tx_1}{\Vert x_1\Vert\Vert x_2\Vert}$.

The proof of the second inequality is more immediate, since $\frac{\gamma_s}{\gamma} \geq 1$ we have $\Vert a+b\Vert\leq \left\Vert \frac{x_1}{\gamma} -\frac{x_2}{\gamma}\right\Vert$.
A graphical interpretation of this lemma is presented in Figure~\ref{fig:graphical-interpretation-lemma}.
\end{proof}

Now we use Lemma~\ref{th:lemma-normed-vectors} to prove Theorem \ref{th:minmax-fixpoint-mapping}.
\begin{proof}[Proof of Theorem \ref{th:minmax-fixpoint-mapping}]
It is clear by the projection operator $\Pi_{X_h}(\cdot)$ that $T(\x)$ is a self-map onto itself, so \emph{a)} is immediate.

To prove part \emph{b)} we need to show that $T(\x)$ is a contraction:
\begin{equation} 
	\left\Vert T(\x_{1})-T(\x_{2})\right\Vert \leq q\left\Vert \x_{1}-\x_{2}\right\Vert 
	\label{eq:contraction-mapping-def}
\end{equation}
for all $\x_{1},\x_{2}\in X$, and some $q\in(0,1)$. We have
\begin{IEEEeqnarray*}{c}
	\left\Vert \sqrt{\alpha}\tfrac{\a+A\x_{1}}{\left\Vert \a+A\x_{1}\right\Vert }-\sqrt{\alpha}\tfrac{\a+A\x_{2}}
	{\left\Vert \a+A\x_{2}\right(\Vert }\right\Vert 
	\overset{(i)}{\leq}  \sqrt{\alpha}\left\Vert \tfrac{\a+A\x_{1}}
	{\gamma}-\tfrac{\a+A\x_{2}}{\gamma}\right\Vert  \\
	 \leq \tfrac{\sqrt{\alpha}}{\gamma}\left\Vert A\right\Vert \left\Vert \x_{1}-\x_{2}\right\Vert
	 = \tfrac{\sqrt{\alpha}}{\gamma}(|\mathcal{D}|-1) \left\Vert \x_{1}-\x_{2}\right\Vert
\end{IEEEeqnarray*}
where in step $(i)$ we have applied the result from Lemma \ref{th:lemma-normed-vectors} and defined $\gamma=\min_{\x\in X}\left\Vert \a+A\x\right\Vert$.

At this point, we can infer that~\eqref{eq:contraction-mapping-def} is satisfied with $q=\tfrac{\sqrt{\alpha}}{\gamma}(|\mathcal{D}|-1)$ and hence, $T(\x)$ is a contraction mapping in region $X$ if $\frac{\sqrt{\alpha}}{\gamma}(|\mathcal{D}|-1)<1$ is satisfied. 
In order to prove this last condition, we analyze the quadratic form $\Vert A\x+\a\Vert >\sqrt{\alpha}(|\mathcal{D}|-1)$, which is depicted in Figure~\ref{fig:graphical-interpretation-theorem}. 
This inequality forms a paraboloid with minimum point in $x=-A^{-1}\a$ and curvature specified by it's Hessian $(A^T A)^{1/2}$. 
This Hessian matrix has maximum eigenvalue $\lambda_{\max} = (|\mathcal{D}|-1)$ with associated eigenvector $v_{\max} = \frac{\bm{1}}{\Vert\bm{1}\Vert}$, and the rest of eigenvalues are equal with value $\lambda_{\min} = 1$. 
The level curve corresponding to the value of $\sqrt{\alpha}(|\mathcal{D}|-1)$ is therefore an ellipsoid that grows faster in the direction of $v_{\max}$. 
Then, one sufficient condition to satisfy that the mapping $T$ is always a contraction, is to construct the region $X$ outside of the ellipsoid.
We define $X$ as the upper halfspace limited by the hyperplane with normal vector $v_{\max}$ and most distant point $x=A^{-1}\a+\sqrt{\alpha}\frac{\bm{1}}{\Vert\bm{1}\Vert}$. 
Such region is defined in equation~\eqref{eq:set-X} and, since $X$ is convex, the euclidean projection of $T$ is well defined.
This proves \emph{b)}.

The convergence property of the iterative fixed-point equation directly follows from the Banach fixed point theorem \cite[Th.11.1.6.]{Davidson2009}, which states that the mapping admits a unique fixed point $\x^\ast = T(\x^\ast)$ in $\x\in X$, and that such point can be found as the limit point of the sequence $\x^{k+1} = T(\x^{k})$, for any starting point $\x^0\in X$.  This proves \emph{c)}.

Finally, to prove \emph{d)}, it is sufficient to show that the following problem \textbf{P} is feasible:
\begin{IEEEeqnarray*}{r'l}
	\textbf{P}: &  
\begin{IEEEeqnarraybox}[][c]{r'l}
	\text{find} & \x \\
	\text{s.t.} & \x=\sqrt{\alpha}\tfrac{\a+A \x}{\Vert \a+A \x\Vert}  \\
			    & \bm{1}^T\x+\tfrac{1}{|\mathcal{D}|-1}\a^T\bm{1}-\sqrt{\alpha |\mathcal{D}|}\geq 0,
\end{IEEEeqnarraybox} 
\end{IEEEeqnarray*}
which would prove the existence of a point that solves~\eqref{eq:solgame_norm2} and is inside region $X$.
An equivalent formulation of \textbf{P} is
\begin{IEEEeqnarray*}{r'l}
	\text{find} & \x, z \\
	\text{s.t.} & \sqrt{\alpha}\bm{1}^T(\a+ A\x) +\big( \tfrac{1}{|\mathcal{D}|-1}\a^T \bm{1}-\sqrt{\alpha |\mathcal{D}|}    \big)z \geq 0\\
			    & \Vert A\x+\a\Vert = z,\quad z>0. 
\end{IEEEeqnarray*}
If $\tfrac{1}{|\mathcal{D}|-1}\a^T \bm{1}-\sqrt{\alpha |\mathcal{D}|} = 0$ the previous problem is unconditionally feasible.
Otherwise, it can be transformed as follows
\begin{IEEEeqnarray}{r'l}
	\text{find} & \bm{y}, z \IEEEnonumber \\
	\text{s.t.} & z \geq \tfrac{-\sqrt{\alpha}}{\tfrac{1}{|\mathcal{D}|-1}\a^T\bm{1}-\sqrt{\alpha |\mathcal{D}|}}\bm{1}^T\bm{y} 
				\label{eq:proof-plane} \\
			    & \Vert \bm{y}\Vert = z,\quad z>0. 
			    \label{eq:proof-cone}
\end{IEEEeqnarray}
Equation~\eqref{eq:proof-plane} represents a halfspace defined by a non vertical and non horizontal hyperplane that passes through the origin.
Equations in~\eqref{eq:proof-cone} represent a positive cone.
Since the intersection of these spaces is necessarily nonempty the problem is feasible, and so is \textbf{P}. This proves \emph{d)} and concludes the proof of the theorem.
\end{proof}
\begin{figure}
    \centering
    \includegraphics[{width=6.8cm},height=3.5cm]{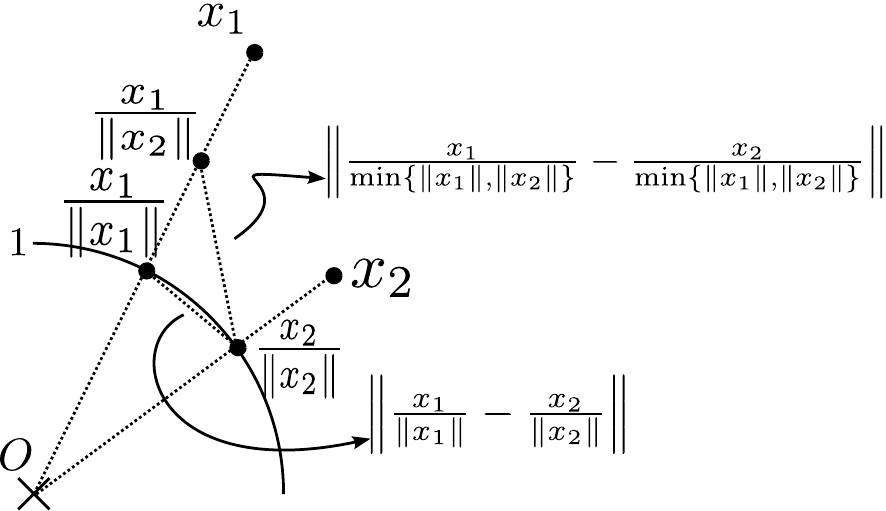}
	\caption{Graphical interpretation of Lemma~\ref{th:lemma-normed-vectors}}
	\label{fig:graphical-interpretation-lemma}
\end{figure}
\begin{figure}
    \centering
    \includegraphics[width=6.8cm,height=4.3cm]{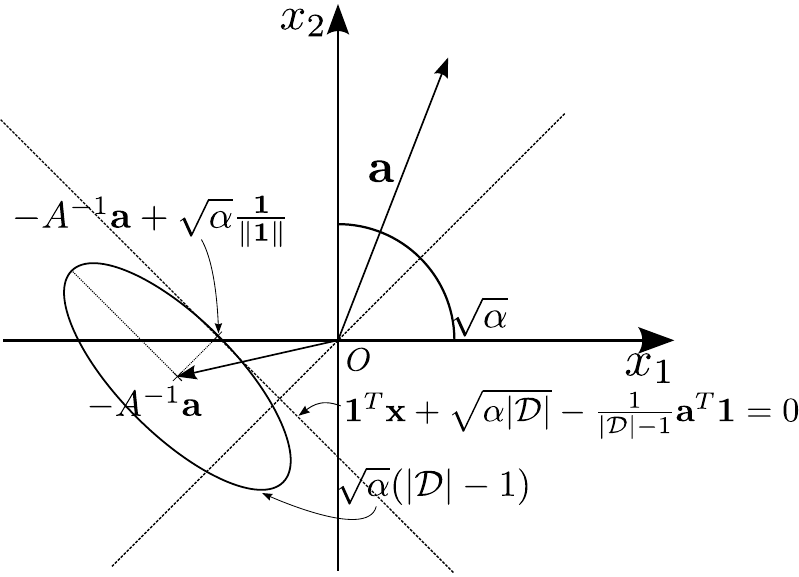}
	\caption{Graphical interpretation of Theorem~\ref{th:lemma-normed-vectors}}
	\label{fig:graphical-interpretation-theorem}
	\vspace{-1em}
\end{figure}

\pagebreak
\section*{Acknowledgment}
The authors would like to thank the anonymous reviewers for their comments, which helped to improve the manuscript.

\bibliographystyle{IEEEbib}
\bibliography{smartgrid}

\begin{thebibliography}{10}

\bibitem{Bertsekas1997}
D.P. Bertsekas and J.N. Tsitsiklis,
\newblock {\em Parallel and Distributed Computation: Numerical Methods},
\newblock Athena scientific optimization and computation series. Athena
  Scientific, 1997.

\bibitem{Atzeni2012}
I.~Atzeni, L.G. Ordonez, G.~Scutari, D.P. Palomar, and J.R. Fonollosa,
\newblock ``Demand-side management via distributed energy generation and
  storage optimization,''
\newblock {\em IEEE Transactions on Smart Grid}, vol. 4, no. 2, pp. 866--876,
  June 2013.

\bibitem{Cumanan2008}
K.~Cumanan, R.~Krishna, V.~Sharma, and S.~Lambotharan,
\newblock ``Robust interference control techniques for multiuser cognitive
  radios using worst-case performance optimization,''
\newblock in {\em Asilomar Conference on Signals, Systems and Computers},
  Pacific Grove, USA, Oct. 2008.

\bibitem{Atzeni2012a}
I.~Atzeni, L.G. Ordonez, G.~Scutari, D.P. Palomar, and J.R. Fonollosa,
\newblock ``Day-ahead bidding strategies for demand-side expected cost
  minimization,''
\newblock in {\em IEEE Third International Conference on Smart Grid
  Communications (SmartGridComm)}, Tainan, Nov 2012, pp. 91--96.

\bibitem{Samadi2010}
P.~Samadi, A.-H. Mohsenian-Rad, R.~Schober, V.W.S. Wong, and J.~Jatskevich,
\newblock ``Optimal real-time pricing algorithm based on utility maximization
  for smart grid,''
\newblock in {\em International Conference on Smart Grid Communications
  (SmartGridComm)}, Gaithersburg, MD, Oct 2010, pp. 415--420.

\bibitem{Conejo2010}
A.J. Conejo, J.M. Morales, and L.~Baringo,
\newblock ``Real-time demand response model,''
\newblock {\em IEEE Transactions on Smart Grid}, vol. 1, no. 3, pp. 236--242,
  Dec 2010.

\bibitem{Saad2012}
W.~Saad, Z.~Han, H.V. Poor, and T.~Basar,
\newblock ``Game-theoretic methods for the smart grid: An overview of microgrid
  systems, demand-side management, and smart grid communications,''
\newblock {\em IEEE Signal Processing Magazine}, vol. 29, no. 5, pp. 86--105,
  Sept. 2012.

\bibitem{Tushar2014}
W.~Tushar, B.~Chai, C.~Yuen, D.~B. Smith, and H.~V. Poor,
\newblock ``Energy management for a user interactive smart community: {A}
  {Stackelberg} game approach,''
\newblock in {\em 2014 {IEEE} {Innovative} {Smart} {Grid} {Technologies} -
  {Asia} ({ISGT} {ASIA})}, May 2014, pp. 709--714.

\bibitem{Wang2014}
Y.~Wang, W.~Saad, N.~B. Mandayam, and H.~V. Poor,
\newblock ``Integrating energy storage into the smart grid: {A} prospect
  theoretic approach,''
\newblock in {\em 2014 {IEEE} {International} {Conference} on {Acoustics},
  {Speech} and {Signal} {Processing} ({ICASSP})}, May 2014, pp. 7779--7783.

\bibitem{Mohsenian-Rad2010}
A.-H. Mohsenian-Rad, V.W.S. Wong, J.~Jatskevich, R.~Schober, and
  A.~Leon-Garcia,
\newblock ``Autonomous demand-side management based on game-theoretic energy
  consumption scheduling for the future smart grid,''
\newblock {\em IEEE Transactions on Smart Grid}, vol. 1, no. 3, pp. 320--331,
  Dec. 2010.

\bibitem{Scutari2012}
G.~Scutari, D.P. Palomar, F.~Facchinei, and J.-S. Pang,
\newblock ``Monotone games for cognitive radio systems,''
\newblock in {\em Distributed Decision Making and Control}, vol. 417 of {\em
  Lecture {Notes} in {Control} and {Information} {Sciences}}, pp. 83--112.
  Springer, Jan. 2012.

\bibitem{Wang2011}
J.~Wang, G.~Scutari, and D.P. Palomar,
\newblock ``Robust {MIMO} cognitive radio via game theory,''
\newblock {\em IEEE Transactions on Signal Processing}, vol. 59, no. 3, pp.
  1183--1201, 2011.

\bibitem{Hiriart-Urruty2001}
J.B. Hiriart-Urruty,
\newblock ``Global optimality conditions in maximizing a convex quadratic
  function under convex quadratic constraints,''
\newblock {\em Journal of Global Optimization}, vol. 21, no. 4, pp. 443--453,
  Dec. 2001.

\bibitem{Bertsekas1999}
D.P. Bertsekas,
\newblock {\em Nonlinear programming},
\newblock Athena Scientific, Belmont, Mass., 2nd edition edition, Sept. 1999.

\bibitem{Facchinei2010}
F.~Facchinei and J.-S. Pang,
\newblock ``Nash equilibria: the variational approach,''
\newblock in {\em Convex optimization in signal processing and communications}.
  Cambridge University Press, Dec. 2009.

\bibitem{Facchinei2014}
F.~Facchinei, J.-S. Pang, and G.~Scutari,
\newblock ``Non-cooperative games with minmax objectives,''
\newblock {\em Computational Optimization and Applications}, vol. 59, no. 1-2,
  pp. 85--112, Mar. 2014.

\bibitem{Facchinei2010a}
F.~Facchinei and C.~Kanzow,
\newblock ``Generalized {Nash} equilibrium problems,''
\newblock {\em Annals of Operations Research}, vol. 175, no. 1, pp. 177--211,
  Mar. 2010.

\bibitem{Boyd2004}
S.~Boyd and L.~Vandenberghe,
\newblock {\em Convex optimization},
\newblock Cambridge university press, Mar. 2004.

\bibitem{Nishimura1981}
K.~Nishimura and J.~Friedman,
\newblock ``Existence of {Nash} equilibrium in n person games without
  quasi-concavity,''
\newblock {\em International Economic Review}, vol. 22, no. 3, pp. 637--648,
  Oct. 1981.

\bibitem{Horn2012}
R.A. Horn and C.R. Johnson,
\newblock {\em Matrix analysis},
\newblock Cambridge University Press, Dec. 2012.

\bibitem{Davidson2009}
K.R. Davidson and A.P. Donsig,
\newblock {\em Real {Analysis} and {Applications}: {Theory} in {Practice}},
\newblock Springer, Oct. 2009.

\end{thebibliography}

%
\vspace{-3em}
\begin{IEEEbiography}[{\includegraphics[width=1in,height=1.25in,clip,keepaspectratio]{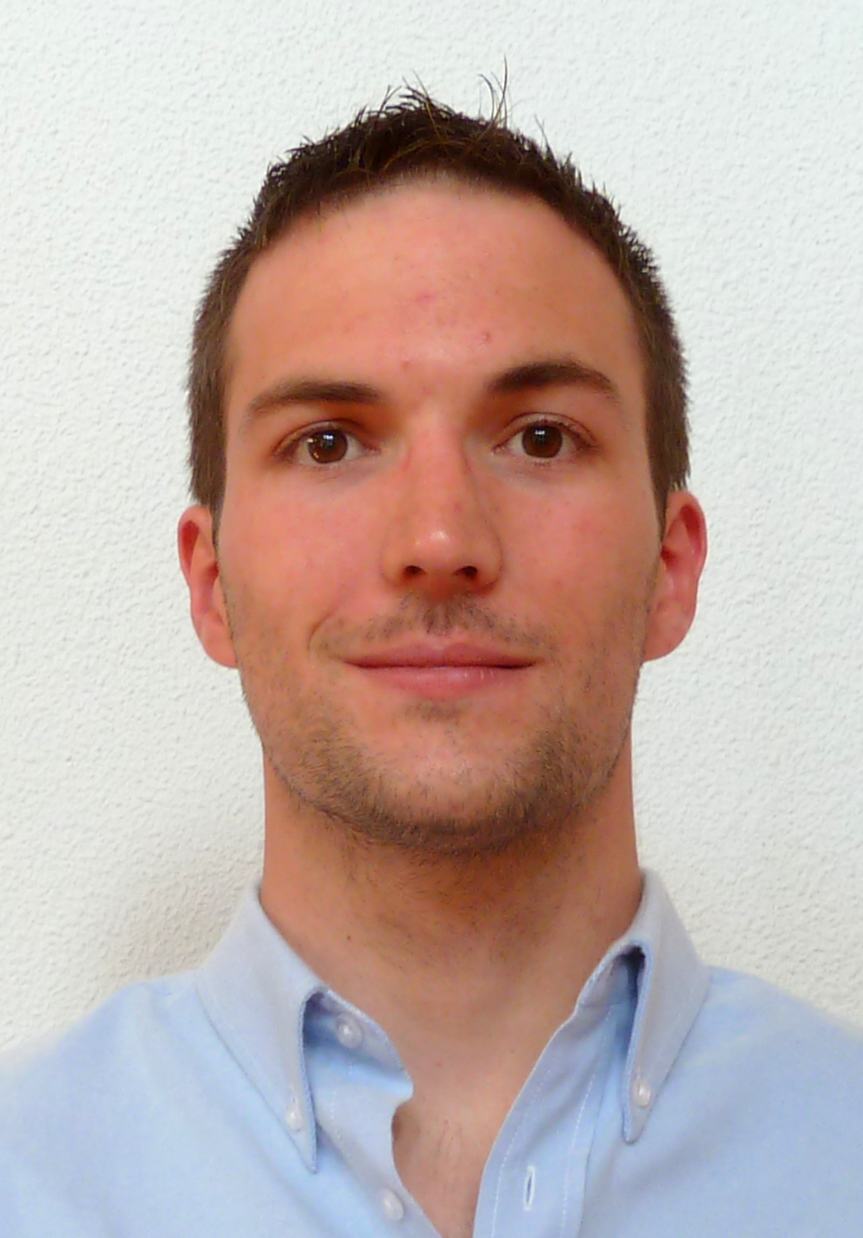}}]{Javier
Zazo}
(S'12)
received his Telecommunications Engineer degree from Universidad
Polit\'ecnica de Madrid (UPM) and Technische Universit\"at Darmstadt
(TUD) in 2010, and M.Sc. by Research from the National University of
Ireland, Maynooth (NUIM) in 2012.
He is currently pursuing a Ph.D. degree in Communications Systems at UPM,
funded by an FPU doctoral grant from the Spanish Ministry of Science and
Innovation.
His current research interests include distributed optimization, game
theory and variational inequalities, applied in areas such as wireless
networks and smart grid.
\end{IEEEbiography}

\vspace{-3em}
\begin{IEEEbiography}[{\includegraphics[width=1in,height=1.25in,clip,keepaspectratio]{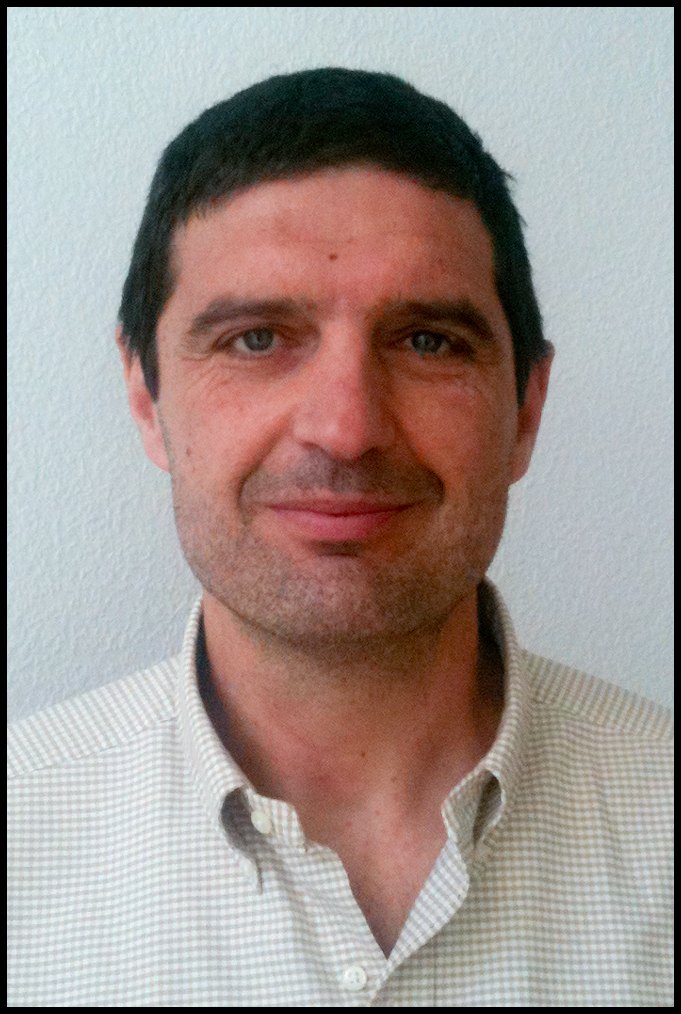}}]{Santiago
Zazo}
(M'12)
is Telecom Engineer and Dr. Engineer by Universidad Polit\'ecnica de
Madrid (UPM),
since 1990 and 1995 respectively.
From 1991 to 1994 he was with the University of Valladolid,
and with the University Alfonso X El Sabio, from 1995 to
1997.
In 1998, he joined UPM as Associate Professor in Signal Theory and
Communications.
His main research activities are in the field of Signal Processing,
with applications to audio, radar,
communications, MIMO and Wireless Sensor Networks.
His main activity at this moment is related to distributed algorithms,
optimization and game theory.
Since 1990 he has (co)authored more than 30 journal papers and about 150
conference papers.
He has also led many private and national Spanish projects and has
participated in five European projects, leading three of them.
\end{IEEEbiography}

\vspace{-3em}
\begin{IEEEbiography}[{\includegraphics[width=1in,height=1.25in,clip,keepaspectratio]{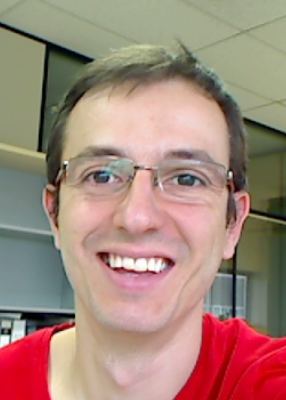}}]{Sergio
Valcarcel Macua}
(S'10)
received his B.S. in Telecommunications Engineering and M.S. in
Electronic Systems from Universidad Polit\'ecnica de Madrid (UPM),
where he is currently a Ph.D. candidate.
Before starting his Ph.D.,
he has been Research Associate at Robotics Institute, Carnegie Mellon
University,
and Research Visitor at University of California, Los Angeles,
and at INRIA Nord, Lille.
His research interests include reinforcement learning,
optimization,
machine learning and statistical signal processing
in distributed multiagent systems.
\end{IEEEbiography}




\end{document}